\numberwithin{equation}{section}
\theoremstyle{plain}
\newtheorem{theorem}{Theorem}[section]
\newtheorem{lemma}[theorem]{Lemma}
\newtheorem{proposition}[theorem]{Proposition}
\newtheorem{corollary}[theorem]{Corollary}
\theoremstyle{remark}
\newtheorem*{remark}{Remark}
\theoremstyle{definition}
\newtheorem*{definition}{Definition}
\newcommand{\R}{\mathbb{R}}
\newcommand{\nat}{{{\rm I} \kern -.15em {\rm N} }}
\begin{document}

\title{On the asymptotic behavior of symmetric solutions of the Allen-Cahn equation in unbounded domains in $\R^2$\\}
\author{  Giorgio Fusco\footnote{ Universit\`a degli Studi di L'Aquila, Via Vetoio, 67010 L'Aquila, Italy; e-mail:{\texttt{fusco@univaq.it}}},\ \ Francesco Leonetti\footnote{Dipartimento di Ingegneria e
 Scienze dell'Informazione e Matematica, Universit\`a degli Studi di L'Aquila, Via Vetoio, 67010 L'Aquila, Italy; e-mail:{\texttt{leonetti@univaq.it}}} \ \ and Cristina Pignotti\footnote{Dipartimento di  Ingegneria e
 Scienze dell'Informazione e Matematica, Universit\`a degli Studi di L'Aquila, Via Vetoio, 67010 L'Aquila, Italy; e-mail:{\texttt{pignotti@univaq.it}}} }
\date{}
\maketitle

\begin{abstract}
We consider a Dirichlet problem for the Allen--Cahn equation in a smooth, bounded or unbounded, domain $\Omega\subset\R^n.$
Under suitable assumptions, we prove an existence result and a uniform exponential estimate for symmetric solutions. In dimension $n=2$ an additional asymptotic result is obtained.
These results are based on a pointwise estimate obtained for local minimizers of the Allen--Cahn energy.
\end{abstract}

\section{Introduction}
We consider the Allen-Cahn equation
\begin{eqnarray}\label{equation}
\left\{\begin{array}{l}\Delta u = W^{\prime}(u), \quad x\in\Omega,\\
 u=g, \hspace{1.5 cm} x\in \partial\Omega.
\end{array}\right.
\end{eqnarray}
where $\Omega\subset\R^n$ is a bounded or unbounded domain, $g:\partial\Omega\rightarrow\R$ is continuous and bounded and $W:\R\rightarrow\R$ is a $C^3$ potential.

We are interested in symmetric solutions:
 \[u(\hat{x})=-u(x),\;\text{ for }\;x\in\Omega\]
 where for $z\in\R^d$ we let $\hat{z}=(-z_1, z_2,\dots,z_d)$ the reflection in the plane $z_1=0$.
We assume:
\begin{description}
\item[h$_1-$] $W:\R\rightarrow\R$ is an even function:
\begin{eqnarray}
W(-u)=W(u),\; \text{ for }\;u\in\R,
\end{eqnarray}
which has a unique non-degenerate positive minimizer:
\begin{eqnarray}\label{potential}
&&0=W(1)<W(u),\; \text{ for }\;u\geq 0,\\\nonumber
&&W^{\prime\prime}(1)>0.
\end{eqnarray}

\item[h$_2-$] There is $M>0$ such that
\begin{eqnarray}\label{m-exists}
W^\prime(u)\geq 0,\;\text{ for }\;u\geq M.
\end{eqnarray}

\item[h$_3-$] $\Omega\subset\R^n$ is a domain with nonempty boundary which is symmetric:
\begin{eqnarray}\label{omega-symmetric}
x\in\Omega\Rightarrow\hat{x}\in\Omega,
\end{eqnarray}
and of class $C^{2+\alpha}$. If $\Omega$ is unbounded we require that $\Omega$ satisfies a uniform interior sphere condition and that the curvature of $\partial\Omega$ is bounded in the $C^\alpha$ sense.
\end{description}

 If $S\subset\R^d$ is a symmetric set, we define $S^+:=\{x\in S:x_1>0\}$.
We first consider the case of general $n\geq 1$ and prove the existence of a symmetric solution which is near $1$ in $\Omega^+$. Note that, in general,  $\partial(\Omega^+)\neq(\partial\Omega)^+$.
\begin{theorem}\label{main}
Assume that $W$ and $\Omega\subset\R^n$ satisfy h$_1$, h$_2$ and h$_3$.  Assume that $g:\partial\Omega\rightarrow\R$ is symmetric and bounded as a $C^{2,\alpha}(\partial\Omega;\R)$ function and satisfies
\[g(x)\geq 0,\;\text{ for }\; x\in(\partial\Omega)^+.\]
 Then, problem $(\ref{equation})$ has a symmetric classical solution $u\in C^2(\overline{\Omega};\R)$ such that \begin{equation}\label{u-properties}
\begin{array}{l}
 u(x)\geq 0,\;\text{ for }\; x\in\Omega^+ ,\\
 \displaystyle{
\vert u(x)-1\vert\leq K e^{-k d(x,\partial(\Omega^+))}, \quad x\in\Omega^+,}
\end{array}
\end{equation}
for some positive constants $k$, $K$ that depend only on $W, n$ and on the $C^1(\overline{\Omega};\R)$ norm of $u$.
\end{theorem}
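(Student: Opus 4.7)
The plan is to reduce the problem to $\Omega^+$ via the antisymmetry, construct there a nonnegative solution by exhausting $\Omega^+$ with bounded subdomains and minimizing the Allen--Cahn energy, and finally reflect oddly across $\{x_1=0\}$. Since $W$ is even, $W'$ is odd, so any $v\in C^2(\overline{\Omega^+})$ solving $\Delta v=W'(v)$ on $\Omega^+$ with $v=g$ on $(\partial\Omega)^+$ and $v=0$ on $\Omega\cap\{x_1=0\}$ extends by $u(\hat x):=-v(x)$ to a classical symmetric solution on all of $\Omega$; the compatibility $g=0$ on $\partial\Omega\cap\{x_1=0\}$ follows from continuity together with the assumed symmetry of $g$.

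To produce $v$, set $\Omega_R^+:=\Omega^+\cap B_R(0)$ and minimize
\[
J_R(w)=\int_{\Omega_R^+}\bigl(\tfrac12|\nabla w|^2+W(w)\bigr)\,dx
\]
over $w\in H^1(\Omega_R^+)$ with $w=g$ on $(\partial\Omega)^+\cap\overline{B_R}$, $w=0$ on $\Omega\cap\{x_1=0\}\cap\overline{B_R}$, and $w=1$ on $\Omega^+\cap\partial B_R$. With $M_0:=\max(M,1,\|g\|_\infty)$, the truncations $w\mapsto|w|$ (by evenness of $W$) and $w\mapsto\min(w,M_0)$ (by h$_2$) are admissible and do not increase $J_R$, so a minimizer $u_R$ exists with $0\le u_R\le M_0$; elliptic regularity and the $C^{2,\alpha}$ data from h$_3$ yield $u_R\in C^2(\overline{\Omega_R^+})$ with $C^1$ norm uniform in $R$. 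Interior and boundary Schauder estimates combined with a diagonal extraction along $R_k\to\infty$ then produce a $C^2_{\mathrm{loc}}$ limit $v\ge 0$ solving the Dirichlet problem on $\Omega^+$, and the odd reflection furnishes the required $u$.

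For the exponential bound, each $u_R$ is a local minimizer of $J_R$, so the pointwise estimate for local minimizers of the Allen--Cahn energy (the technical core of the paper, as announced in the abstract) applies to give $|u_R(x)-1|\le K e^{-k\,d(x,\partial\Omega_R^+)}$ with $K,k$ determined only by $W$, $n$ and the uniform $C^1$ bound; letting $R\to\infty$ and noting that $d(x,\partial\Omega_R^+)\to d(x,\partial(\Omega^+))$ for each fixed $x$ yields (\ref{u-properties}). The main obstacle is maintaining uniformity in $R$: the $C^1$ bound must not degenerate near the artificial boundary $\Omega^+\cap\partial B_R$, and the pointwise estimate must produce the distance to $\partial(\Omega^+)$ rather than an artifact of the truncation. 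Prescribing the trace $1$ on $\Omega^+\cap\partial B_R$ is the natural choice---it places $u_R$ already at the minimum of $W$ there, consistent with the expected profile---and hypothesis h$_3$ (uniform interior sphere condition and controlled boundary curvature) is precisely what makes the boundary Schauder estimates uniform near $(\partial\Omega)^+$.
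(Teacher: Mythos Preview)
Your strategy is essentially the paper's: minimize the energy on bounded approximations, truncate to force $0\le u_R\le M_0$, invoke the pointwise estimate for local minimizers (Theorem~\ref{teo2}) for the exponential decay, and pass to the limit. The paper works instead with symmetric test functions on smooth bounded symmetric subdomains $\Omega_j\subset\Omega$ rather than on $\Omega^+\cap B_R$; this avoids the odd-reflection step and, more to the point, sidesteps a technical snag in your construction. The Dirichlet data you prescribe on $\partial\Omega_R^+$ is discontinuous where the hyperplane $\{x_1=0\}$ meets the sphere $\partial B_R$ (you require $0$ on one piece and $1$ on the other), and $\Omega_R^+$ itself has corners, so the claim $u_R\in C^2(\overline{\Omega_R^+})$ with uniform $C^1$ norm cannot hold as written. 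The defect is localized near the receding artificial boundary and is harmless in the limit, but your uniform $C^1$ bound must be restated on, say, $\Omega_{R/2}^+$, using only interior and true-boundary Schauder estimates; an easy fix is to prescribe $\bar u(x_1)$ rather than the constant $1$ on the artificial piece.

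One further point you leave implicit and the paper makes explicit: Theorem~\ref{teo2} requires a potential with a \emph{unique} minimum at $0$, so it is applied not to $u_R$ with $W$ but to $u_R-1$ with a modified potential $\tilde W(\cdot+1)$, where $\tilde W$ agrees with $W$ on $[0,\infty)$ and is made strictly larger than $W(|\cdot|)$ on $(-\infty,0)$. The nonnegativity $u_R\ge 0$ is exactly what ensures $u_R$ is still a local minimizer for $\tilde W$.
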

\noindent (We assume that $g$ is extended to $\Omega$ as a symmetric $C^{2,\alpha}$ map).
A similar statement is valid in the case of Neumann boundary conditions.

We then restrict to the case $n=2$ and prove the following asymptotic result
\begin{theorem}\label{teo}
Assume $W$ as in Theorem $\ref{main}$ and assume that $\Omega\subset\R^2$ satisfies h$_3$  and is convex in $x_1$ i.e.
\begin{eqnarray}\label{x1-convex}
(x_1,x_2)\in\Omega\Rightarrow(t x_1,x_2)\in\Omega,\;\text{ for }\;\vert t\vert\leq 1.
\end{eqnarray}
Let $u$ be the solution of the Allen-Cahn equation $(\ref{equation})$ given by Theorem $\ref{main}.$ Then there exists a continuous decreasing map $R\rightarrow q(R),\;\;\lim_{R\rightarrow+\infty}q(R)=0$, such that
\begin{eqnarray}\label{near-ubar}
\vert u(x_1,x_2)-\bar{u}(x_1)\vert\leq q\Big(d(x,\partial\Omega)\Big),\;\text{ for }\;x\in\Omega,
\end{eqnarray}
where $\bar{u}:\R\rightarrow\R$ is the odd solution of
\begin{eqnarray}\label{odd-solution}
&& v^{\prime\prime}=W^\prime(v),\;\;s\in\R\\\nonumber
&& \lim_{s\rightarrow+\infty}v(s)=1.
\end{eqnarray}
The map $q$  depends only on $W, n$ and on the $C^1(\overline{\Omega};\R)$ norm of $u$.
\end{theorem}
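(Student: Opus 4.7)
The plan is a compactness/contradiction argument that reduces the assertion to a one-dimensional rigidity result for entire solutions of $\Delta u=W'(u)$ in $\R^2$. Assuming the conclusion fails, one obtains $\delta>0$ and points $x^n=(x_1^n,x_2^n)\in\Omega$ with $R_n:=d(x^n,\partial\Omega)\to\infty$ and $|u(x^n)-\bar u(x_1^n)|\geq\delta$. By the odd symmetry of $u$ and $\bar u$ I may assume $x_1^n\geq 0$. The subcase $x_1^n\to+\infty$ can be ruled out directly from Theorem~\ref{main}: since $(\partial\Omega)^+\subset\partial\Omega$ we have $d(x^n,(\partial\Omega)^+)\geq R_n\to\infty$, whence $d(x^n,\partial(\Omega^+))=\min(x_1^n,d(x^n,(\partial\Omega)^+))\to\infty$; the exponential estimate then gives $u(x^n)\to 1$, and $\bar u(x_1^n)\to 1$ as well, contradicting the lower bound $\delta$. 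Hence $\{x_1^n\}$ is bounded and, up to a subsequence, $x_1^n\to x_1^*\in[0,\infty)$.

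Next I would introduce the $x_2$-translates $u_n(y_1,y_2):=u(y_1,y_2+x_2^n)$ on the domains $\Omega_n:=\Omega-x_2^n e_2$. Since $B_{R_n}((x_1^n,0))\subset\Omega_n$ and $R_n\to\infty$, every compact subset of $\R^2$ lies in $\Omega_n$ for all $n$ large. The $L^\infty$ bound on $u$ (from h$_2$ together with the maximum principle and boundedness of $g$) combined with interior Schauder estimates yields a uniform $C^{2,\alpha}$ bound for $\{u_n\}$ on compacts; along a subsequence $u_n\to u_\infty$ in $C^2_{\mathrm{loc}}(\R^2)$, with $u_\infty$ an entire classical solution of $\Delta u_\infty=W'(u_\infty)$, odd in $y_1$. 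For fixed $y_1>0$ and large $n$ one has $d((y_1,y_2+x_2^n),(\partial\Omega)^+)\geq R_n-\sqrt{(y_1-x_1^n)^2+y_2^2}\to\infty$, while the distance to $\{x_1=0\}\cap\overline\Omega$ is at least $y_1$, so $d((y_1,y_2+x_2^n),\partial(\Omega^+))\geq y_1$ for $n$ large. Theorem~\ref{main} then passes to the limit, giving $|u_\infty(y_1,y_2)-1|\leq Ke^{-ky_1}$ for $y_1>0$ uniformly in $y_2$, and the symmetric bound for $y_1<0$.

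Finally I would invoke a one-dimensional symmetry theorem for entire bounded solutions of $\Delta u=W'(u)$ on $\R^2$ whose limits at $y_1\to\pm\infty$ are $\pm1$ uniformly in $y_2$ (of Berestycki--Caffarelli--Nirenberg / Berestycki--Hamel--Monneau type): $u_\infty$ depends only on $y_1$, i.e.\ $u_\infty(y_1,y_2)=V(y_1)$ with $V''=W'(V)$ and $V(\pm\infty)=\pm 1$. Odd symmetry plus uniqueness of the heteroclinic up to translation then forces $V=\bar u$, so $u(x^n)=u_n(x_1^n,0)\to u_\infty(x_1^*,0)=\bar u(x_1^*)=\lim_n\bar u(x_1^n)$, contradicting $|u(x^n)-\bar u(x_1^n)|\geq\delta$. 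Consequently $\tilde q(R):=\sup\{|u(x)-\bar u(x_1)|:x\in\Omega,\ d(x,\partial\Omega)\geq R\}$ is a decreasing function tending to $0$, and any continuous decreasing majorant of $\tilde q$ with the same limit yields the $q$ of the statement. The main obstacle is the 2D rigidity step: classifying entire Allen--Cahn solutions with prescribed uniform limits at infinity is nontrivial and typically requires the sliding method or a De Giorgi-type energy identity; the rest is a fairly standard compactness argument, with the convexity of $\Omega$ in $x_1$ entering principally through Theorem~\ref{main} and to keep the limiting geometry clean (the Kuratowski limit of $\Omega_n$ being all of $\R^2$).
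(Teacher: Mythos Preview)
Your argument is correct but follows a genuinely different route from the paper. The paper's proof is purely variational and self-contained: it exploits the local minimality of $u$ directly by building explicit competitors on long thin rectangles $C_l^r=(-l,l)\times(\eta-r,\eta+r)\subset\Omega$. The key ingredients are a spectral-gap inequality for the second variation of the one-dimensional energy at $\bar u$ (giving $E_l(v)\geq \tfrac12 c^2\|v\|_l^2$ for small odd perturbations $v$, Lemma~\ref{lemma1}) and a cutting construction (Lemmas~\ref{elle-zero}--\ref{de-exists}) that replaces $u$ by a map whose slicewise $L^2$-distance to $\bar u$ is at most $2\bar q$; if $|u(\tilde x)-\bar u(\tilde x_1)|>\epsilon$ at a point deep inside $\Omega$, this competitor has strictly smaller energy, contradicting minimality. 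The $x_1$-convexity hypothesis is used precisely so that such rectangles, centred on the symmetry axis, fit inside $\Omega$ whenever $d(x,\partial\Omega)$ is large.

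Your compactness-plus-rigidity route is softer: it black-boxes the hard step into the Gibbons-type symmetry theorem (Berestycki--Hamel--Monneau, Farina, Barlow--Bass--Gui) and otherwise uses only Theorem~\ref{main} and elliptic estimates. Two remarks. First, as written the contradiction is for a fixed $\Omega$, so $\tilde q$ a priori depends on $\Omega$; to recover the stated dependence only on $W$ and $\|u\|_{C^1}$ you must run the argument across a sequence of admissible pairs $(\Omega_k,u_k)$ with a common $C^1$ bound---this works because the constants $K,k$ in Theorem~\ref{main} are already uniform in $\Omega$. Second, as you half-observe, the $x_1$-convexity is never actually invoked in your argument (Theorem~\ref{main} does not assume it, and your limiting domain is all of $\R^2$ regardless), so your approach in fact suggests the hypothesis may be dispensable; the paper needs it only for the rectangle geometry. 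The trade-off is that the paper's proof is quantitative and self-contained, whereas yours imports a substantial external rigidity theorem.
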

A convergence result for odd solutions of (\ref{equation}) similar to (\ref{near-ubar}) valid in the case $\Omega\subset\R^n$ is a half space was obtained, among other things, in \cite{eh} (cfr. Theorem 1.1). The point in Theorem \ref{teo} is that, even though is restricted to $n=2$, applies to general domains that satisfy (\ref{x1-convex}).
Some of the ideas in the proof of Theorem \ref{teo} have been extended and utilized in \cite{af} where the restriction to $n=2$ is removed and $u$ is allowed to be a vector.

The proof of Theorems \ref{main} and Theorem \ref{teo} is variational and is based on a pointwise estimate for {\it local minimizers} of the Allen-Cahn energy
\begin{eqnarray}\label{ac-energy}
J_A(u):=\int_A(\frac{1}{2}\vert\nabla u\vert^2+W(u))dx,
\end{eqnarray}
defined for all bounded domain $A\subset\R^n$ and $u\in W^{1,2}(A;\R)$.
\begin{definition}\label{local-min}
Let $\Omega\subset\R^n$ be a domain. A map $u\in W_{\rm loc}^{1,2}(\Omega;\R)$ is a local minimizer of the Allen-Cahn energy if
\begin{eqnarray}\label{minimization}
J_A(u)=\min_{v\in W_0^{1,2}(A;\R)}J_A(u+v),
\end{eqnarray}
for every bounded Lipschitz domain $A\subset\Omega$.
\end{definition}
%We observe that if $\Omega$ is bounded and $u$ is a minimizer of $J_\Omega$ on $g+W_0^{1,2}(\Omega;\R)$ then $u$ is a minimizer in the sense of the above definition.
\noindent In the following we denote by $k, K$ and $C$ generic positive constants that can change from line to line.

The pointwise estimate alluded to above is stated in the following
\begin{theorem}\label{teo2}
Assume $W:\R\rightarrow\R$ is a $C^2$ function such that
\begin{description}
\item[(i)]$0=W(0)<W(v),\;\text{ for }\;v\in\R$,
\item[(ii)]$\liminf_{\vert v\vert\rightarrow+\infty}W(v)>0$,
\item[(iii)]$W^{\prime\prime}(0)>0$.
\end{description}
Let $\Omega\subset\R^n$ a domain and $u\in C^1(\Omega;\R)$ a local minimizer of the Allen-Cahn energy that satisfies
\begin{eqnarray}\label{bounds-udu}
\vert u\vert+\vert\nabla u\vert\leq M_0,\;\text{ for }\;x\in \Omega
\end{eqnarray}
and some $M_0>0$.

Then there is $q^*>0$ with the property that for each $q\in(0,q^*]$ there is $R(q)>0$ such that
\begin{eqnarray}\label{pointw-cond}
B_{x,R(q)}\subset \Omega\Rightarrow \vert u(x)\vert< q.
\end{eqnarray}
Moreover $R(q)$ can be chosen strictly decreasing and continuos in $(0,q^*]$. The inverse map $q(R)$ satisfies
\begin{eqnarray}\label{exp-bound}
q(R)\leq Ke^{-kR},\;\;R\in[R(q^*),+\infty),
\end{eqnarray}
for some positive constants $k, K$ that depend only on $W, n$ and the bound $M_0$.
\end{theorem}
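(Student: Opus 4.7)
The plan is to combine an upper energy bound from local minimality with a density estimate and a maximum-principle comparison in the linearization regime. First, I would use minimality to derive the linear-growth energy bound
\[
J_{B_R(x_0)}(u)\le C\,R^{n-1}\qquad\text{whenever } B_R(x_0)\subset\Omega,
\]
with $C=C(W,n,M_0)$. The competitor is $v=u\phi$, where $\phi\in C^\infty$ satisfies $\phi=0$ on $B_{R-1}(x_0)$, $\phi=1$ on $\partial B_R(x_0)$, and $|\nabla\phi|\le 2$: the bounds $|u|+|\nabla u|\le M_0$ together with the boundedness of $W$ on $[-M_0,M_0]$ and the fact that $W(0)=0$ yield $J(v)\le CR^{n-1}$, hence the same bound for $J(u)$.

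The second and main step is a density estimate: I would show that there exist $q^*\in(0,M_0]$ and $c_0>0$, depending only on $W,n,M_0$, such that if $|u(x_0)|\ge q\in(0,q^*]$ and $B_R(x_0)\subset\Omega$, then $|\{|u|\ge q/2\}\cap B_R(x_0)|\ge c_0\,R^n$. I would follow the De~Giorgi--Caffarelli--C\'ordoba template: reducing by sign to $u(x_0)\ge q>0$, set $V(r):=|\{u\ge q/2\}\cap B_r(x_0)|$ and derive the differential inequality $V'(r)\ge c\,V(r)^{(n-1)/n}$ by comparing $u$ with a competitor that truncates $u$ at $q/2$ on $B_r(x_0)$ and matches $u$ outside a collar of width $1$, combined with the relative isoperimetric inequality applied to the level set $\{u=q/2\}\cap B_r(x_0)$. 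Integrating from $r_0\sim q/M_0$, where $V(r_0)\ge cq^n$ by the gradient bound, gives the density lower bound.

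Combining with the energy bound and the positivity of $m(q/2):=\inf_{q/2\le|v|\le M_0}W(v)>0$ (which holds by (i), (ii) and continuity of $W$) yields
\[
c_0\,m(q/2)\,R^n\le\int_{B_R(x_0)}W(u)\,dx\le C R^{n-1},
\]
so $|u(x_0)|\ge q$ forces $R\le R_0(q):=C/(c_0 m(q/2))$. Setting $R(q):=R_0(q)+\epsilon/q$ for small $\epsilon>0$ makes it strictly decreasing and continuous on $(0,q^*]$. To upgrade the algebraic dependence $R_0(q)\sim 1/m(q/2)\sim 1/q^2$ to the exponential bound $q(R)\le Ke^{-kR}$, I would pass to the linearization regime: shrinking $q^*$ so that $vW'(v)\ge\tfrac12 W''(0)v^2$ for $|v|\le q^*$, the previous bound gives $|u|<q^*$ on the interior set $\Omega':=\{x\in\Omega:B_{R_0(q^*)}(x)\subset\Omega\}$; hence $w:=u^2$ satisfies $\Delta w=2uW'(u)+2|\nabla u|^2\ge W''(0)\,w$ with $0\le w\le(q^*)^2$ there. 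For $x_0\in\Omega'$ with $B_\rho(x_0)\subset\Omega'$, comparison with the radial super-solution of $\Delta\bar w=W''(0)\bar w$ on $B_\rho(x_0)$ with boundary datum $(q^*)^2$ yields $w(x_0)\le C\rho^{(n-1)/2}e^{-\sqrt{W''(0)}\rho}$. Taking $\rho=R-R_0(q^*)$ and a square root gives $|u(x_0)|\le Ke^{-kR}$ with $k=\sqrt{W''(0)}/2$, establishing (\ref{exp-bound}).

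The main obstacle will be the density estimate: setting up the correct competitor in the single-well setting (where both terms of the energy decrease under naive truncation, unlike the double-well case) and controlling the perimeter of $\{u=q/2\}\cap B_r(x_0)$ by $V'(r)$ requires some care, and the heavy lifting is done by the relative isoperimetric inequality. The upper energy bound and the final exponential comparison are routine variational and maximum-principle arguments.
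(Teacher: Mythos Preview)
Your overall plan---linear energy upper bound, a density estimate for $\{|u|\ge q/2\}$, then a linearized comparison for the exponential rate---is reasonable and genuinely different from the paper's route, but the density-estimate step, which you rightly flag as the crux, has a real gap. In the double-well Caffarelli--C\'ordoba argument the inequality $V'(r)\ge cV(r)^{(n-1)/n}$ comes from a competitor that collapses $u$ to the \emph{other} well, producing a potential-energy gain proportional to the volume while the transition cost is tied to a perimeter controlled via the Modica inequality. Here your competitor $v=\min(u,q/2)$ on $B_r$ avoids any interior jump, but its energy gain
\[
\int_{\{u>q/2\}\cap B_r}\Bigl(\tfrac12|\nabla u|^2+W(u)-W(q/2)\Bigr)
\]
admits no lower bound of the form $cV(r)^{(n-1)/n}$: the potential difference is only nonnegative and vanishes as $u\downarrow q/2$, so yields no volumetric lower bound, and the gradient term, via coarea, controls only $\int_{q/2}^{M_0}\mathcal H^{n-1}(\{u=t\}\cap B_r)\,dt$, not the specific perimeter $\mathcal H^{n-1}(\{u=q/2\}\cap B_r)$ your appeal to the relative isoperimetric inequality requires. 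If instead you push $u$ down to $0$ to secure a gain $\ge m(q/2)V(r)$, you create an interior jump across $\{u=q/2\}\cap B_r$ whose transition cost is again that uncontrolled perimeter. The isoperimetric inequality alone does not close the loop, and the sketch does not say what bounds the interior perimeter.

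The paper bypasses isoperimetry entirely. It combines two competitors on concentric balls: a \emph{cutting} map that forces $|u|=\bar q$ on an intermediate sphere at cost at most $K\,|A_{\bar q}\cap(B_{R+\lambda}\setminus B_R)|$, followed by a replacement on the inner ball obtained by \emph{minimizing} with the cut boundary data---hence with no interior transition cost---which, by comparison with the radial solution of $\Delta\varphi=c^2\varphi$, lies below $\varphi$ and yields an energy drop of at least $k\,|A_{\bar q}\cap B_R|$. Minimality of $u$ then gives the recursion $\sigma_{j+1}\ge(1+k/K)\sigma_j$ for $\sigma_j=|A_{\bar q}\cap B_{\bar R+j\lambda}|$, i.e.\ \emph{exponential} growth of the bad set, contradicting the polynomial volume of balls; this is the content of Corollary~\ref{energy-corollary}, Lemma~\ref{cutting-lemma} and Proposition~\ref{r0-existence}. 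Your final step---comparing $w=u^2$ with a radial supersolution of $\Delta w=W''(0)w$---is correct and is essentially the paper's own endgame via Lemma~\ref{basic}.
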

The paper is organized as follows. In Sect. 2 we use Theorem \ref{teo2} to prove Theorem \ref{main}. In Sect. 3 we prove Theorem \ref{teo}. Finally in Sect. 4 we present a proof of Theorem \ref{teo2}. The proof is an adaptation to the scalar case of arguments developed in \cite{f} and  \cite{f1} for the vector Allen-Cahn equation.
\section{The proof of Theorem \ref{main}}
We first consider the case of $\Omega$ bounded. Then standard arguments from variational calculus yield the existence of a symmetric minimizer $u\in g+W_{0,S}^{1,2}(\Omega;\R)$ of $J_\Omega$; we denote by $W_{0,S}^{1,2}(\Omega;\R)$ the subspace of symmetric maps of $W_0^{1,2}(\Omega;\R)$. Let $g_m=\max_{(\partial\Omega)^+}g$. We can assume
\begin{eqnarray}\label{can-assume-u}
\vert u\vert\leq M^\prime:=\max\{M,g_m\},
\end{eqnarray}
and
\begin{eqnarray}\label{can-assume-u1}
 u\geq 0\;\text{ on }\;\Omega^+.
\end{eqnarray}
To prove (\ref{can-assume-u}) we let $v\in g+W_{0,S}^{1,2}(\Omega;\R)$ the symmetric function defined by
\begin{eqnarray}\label{def-vu}
v=\left\{\begin{array}{l}
u,\;\text{ on }\;\Omega^+\cap\{u\leq M^\prime\},\\
M^\prime,\;\text{ on }\;\Omega^+\cap\{u> M^\prime\},
\end{array}\right.
\end{eqnarray}
and observe that if $\Omega^+\cap\{u> M^\prime\}$ has positive measure, then h$_2$ implies
\begin{eqnarray}
J_\Omega(u)-J_\Omega(v)=\int_{\Omega^+\cap\{u> M^\prime\}}(\vert\nabla u\vert^2+2(W(u)-W(v))) dx>0,
\end{eqnarray}
in contradiction with the minimality of $u$. The proof of (\ref{can-assume-u1}) is similar.

From the bound (\ref{can-assume-u}), the smoothness assumption on $\partial\Omega$ in h$_3$ and elliptic regularity we obtain that $u$ is a classical solution of (\ref{equation}) and
\begin{eqnarray}\label{emme-bound}
\|u\|_{C^{2,\alpha}(\overline{\Omega};\R)}\leq M^{\prime\prime},
\end{eqnarray}
for some constant $M^{\prime\prime}>0$.
The restriction of $u$ to $\Omega^+$  trivially satisfies the definition of  minimizer of the Allen-Cahn energy in $\Omega^+$ with potential $\tilde{W}$ that, by (\ref{can-assume-u}) and (\ref{can-assume-u1}), can be identified with any smooth function that satisfies $\tilde{W}(s)=W(s)$, for $s\geq 0$ and  $\tilde{W}(s)>W(\vert s\vert)$, for $s<0$. From this and (\ref{emme-bound}) it follows that we can apply Theorem \ref{teo2} to $\hat{u}=u-1$ with potential $\tilde{W}(\cdot +1)$ and conclude that $u$ satisfies the exponential estimate
\begin{eqnarray}\label{e}
\vert u(x)-1\vert\leq K e^{-k d(x,\partial\Omega^+)},\;\text{ for }\;x\in\Omega^+,
\end{eqnarray}
with $k, K$ depending only on $W$ and $M^{\prime\prime}$. This concludes the proof for $\Omega$ bounded. If $\Omega$ is unbounded we consider a sequence of bounded domains $\Omega_j,\;j\in\nat ,$ such that $\Omega_j\subset\Omega_{j+1}$ and $\Omega=\cup_j\Omega_j$. From h$_3$ we can assume that the boundary of $\Omega_j$ is of class $C^{2,\alpha}$   and satisfies an interior sphere condition uniformly in $j\in\nat .$ Therefore the same reasoning developed for the case of bounded $\Omega$ yields
\begin{eqnarray}\label{emme-bound-j}
\|u_j\|_{C^{2,\alpha}(\overline{\Omega_j};\R)}\leq M^{\prime\prime},\;\text{ for }\;j\in\nat ,
\end{eqnarray}
and
\begin{eqnarray}\label{e-j}
\vert u_j(x)-1\vert\leq K e^{-k d(x,\partial\Omega_j^+)},\;\text{ for }\;x\in \Omega_j^+,\;j\in\nat .
\end{eqnarray}
The estimate (\ref{emme-bound-j}) implies that, passing to a subsequence if necessary, we can assume that $u_j$ converges locally in $C^2$ to a classical solution $u:\Omega\rightarrow\R$ of (\ref{equation}) and (\ref{e-j}) implies that $u$ satisfies the exponential estimate in Theorem \ref{main}. The proof of Theorem \ref{main} is complete.
\begin{remark}\label{remark}
Elliptic regularity implies that we can upgrade the exponential estimate in Theorem \ref{main} to
\begin{eqnarray}\label{exp-upgrade}
\vert u(x)-1\vert+\vert\nabla u(x)\vert\leq K e^{-k d(x,\partial\Omega^+)},\;\text{ for }\;x\in \Omega^+.
\end{eqnarray}
\end{remark}
In the proof of Theorem \ref{teo} below we make systematic use of the fact that the solution of (\ref{equation}) given by Theorem \ref{main} is a local minimizer in the sense of Definition \ref{local-min}. This is obvious when $\Omega$ is a bounded. If $\Omega$ is unbounded it follows from the fact that $u=\lim_{j\rightarrow+\infty}u_j$ is the limit of a sequence of minimizers $u_j:\Omega_j\rightarrow\R$, $\Omega_j$ bounded, that converges to $u$ uniformly in compacts \cite{s}.
\section{The proof of Theorem \ref{teo}}
We divide the proof in several lemmas.

For $l\in (0,+\infty]$ let
\begin{equation}\label{Bl}
{\mathcal B}_l:=\{\, v\in W^{1,2}_{\rm odd}((-l,l);\R )\, :\, v(\pm l)=0;\;
\Vert v\Vert_{1,l}\le M^{\prime\prime}\,\},
\end{equation}
where  $W^{1,2}_{\rm odd}((-l,l);\R )\subset  W^{1,2}((-l,l);\R )$ is the subset of the odd functions and $\Vert v\Vert_{1,l}$ is the usual $W^{1,2}$ norm of $v.$
Let ${\mathcal S}\subset  W^{1,2}_{\rm odd}((-l,l);\R )$ be defined by
\begin{equation}\label{ESSE}
{\mathcal S}:=\{\, \nu\in  W^{1,2}_{\rm odd}((-l,l);\R )\,:\, \Vert \nu\Vert_l=1\,\},
\end{equation}
where, for $l\in(0,+\infty]$, $\Vert v\Vert_l$ denotes the $L^2((-l,l);\R )$ norm of $v.$  In particular $\|v\|_\infty=\|v\|_{L^2(\R)}$.
For $v\in\mathcal{B}_l$ we write
\[v=q\nu,\;\text{ with }\;q=\|v\|_l\;\text {and }\;\nu\in\mathcal{S}.\]
%If $v\in{\mathcal B}_l$ has $\Vert v\Vert_l>0$ we write $v=q\nu$ with $q:=\Vert v\Vert_l$ and $\nu=\frac{v}{\Vert v\Vert_l}\in{\mathcal S}$.
For $w\in W^{1,2}((-l,l);\R)$ we set
\begin{equation}\label{smallE}
e_l(w)=\int_{-l}^l\Big(\frac{1}{2}\vert w_{x_1}\vert^2+W(w)\Big) dx_1.
\end{equation}
Define $E_l:{\mathcal B}_l\rightarrow \R$ by setting
\begin{equation}\label{Econl}
\begin{split}
& E_l(v)=e_l(\bar{u}+v)-e_l(\bar{u})\\&=\frac 1 2\int_{-l}^l (\vert \overline{u}_{x_1}+v_{x_1}\vert^2-\vert  \overline{u}_{x_1}\vert^2) dx_1 +\int_{-l}^l[W(\overline{u}+v)-W(\overline{u})] dx_1.
\end{split}
\end{equation}
\begin{lemma}\label{lemma1}
 There exist $q_0>0, c>0,$ independent of $l>1$, such that for  $v\in\mathcal{B}_l$, $v=q\nu$, we have
\begin{equation}\label{stimaa}
E_l(q\nu)\ge \frac 1 2 c^2q^2,\hspace{2 cm} 0<q\le q_0,\ \nu\in {\mathcal S},
\end{equation}
\begin{equation}\label{stimab}
E_l(q\nu)\ge \frac 1 2 c^2q_0^2,\hspace{2.6 cm} q_0\le q,\ \nu\in {\mathcal S}.
\end{equation}
Moreover it results
\begin{equation}\label{stimac}
D_{qq}E_l(q\nu)\ge c^2,\hspace{2.6 cm} 0<q\le q_0,\ \nu\in {\mathcal S}.
\end{equation}
\end{lemma}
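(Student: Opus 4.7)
The plan is to Taylor-expand $E_l$ about $v=0$, exploiting that $\bar u$ satisfies the Euler--Lagrange equation $\bar u_{x_1x_1}=W'(\bar u)$. Integration by parts, using the boundary condition $v(\pm l)=0$, kills the linear term, leaving
\begin{equation*}
E_l(v)=\tfrac12 Q_l(v)+R_l(v),\qquad Q_l(v):=\int_{-l}^{l}\!\bigl(v_{x_1}^{\,2}+W''(\bar u)v^{2}\bigr)\,dx_1,
\end{equation*}
with a cubic remainder $R_l(v)=\tfrac16\int_{-l}^l W'''(\bar u+\theta(x_1) v)\,v^{3}\,dx_1$.

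The first key step is a uniform coercivity $Q_l(v)\ge c_*^{\,2}\|v\|_l^{\,2}$, with $c_*>0$ independent of $l>1$, valid on odd $v\in W^{1,2}_0((-l,l);\R)$. I would extend $v$ by zero to an odd $\tilde v\in H^{1}(\R)$ so that $Q_l(v)=\langle L\tilde v,\tilde v\rangle_{L^{2}(\R)}$ for $L:=-\partial_{x_1}^{\,2}+W''(\bar u)$. Differentiating the ODE yields $L\bar u_{x_1}=0$; since $\bar u$ is odd and strictly monotone, $\bar u_{x_1}$ is even and strictly positive, hence the simple ground state of $L$ on $L^{2}(\R)$. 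The Weyl essential spectrum of $L$ starts at $W''(1)>0$, so $L$ has a positive spectral gap $\lambda_1>0$ on $\{\bar u_{x_1}\}^\perp$. Any odd $\tilde v$ is automatically $L^{2}$-orthogonal to the even $\bar u_{x_1}$, giving $Q_l(v)\ge\lambda_1\|v\|_l^{\,2}$ with $c_*^{\,2}:=\lambda_1$, uniformly in $l$.

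To conclude (\ref{stimaa}) and (\ref{stimac}), I would use the one-dimensional Sobolev bound $\|v\|_\infty^{\,2}\le 2\|v\|_l\|v_{x_1}\|_l\le 2qM''$, which follows from $v=q\nu\in\mathcal B_l$, to estimate $|R_l(v)|\le C\sqrt q\,q^{2}$ and obtain $E_l(q\nu)\ge\tfrac12(c_*^{\,2}-C\sqrt q)q^{2}\ge\tfrac12 c^{\,2}q^{2}$ once $q_0$ is small, which is (\ref{stimaa}) with $c:=c_*/\sqrt 2$. The same estimate applied to $\nu=v/q$ gives $D_{qq}E_l(q\nu)=Q_l(\nu)+\int\bigl(W''(\bar u+q\nu)-W''(\bar u)\bigr)\nu^{2}\ge c_*^{\,2}-C\sqrt q\ge c^{\,2}$ for $q\in(0,q_0]$, establishing (\ref{stimac}).

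The bound (\ref{stimab}) for $q\ge q_0$ is where I expect the main difficulty, because the quadratic-dominated regime ends once $\|v\|_\infty$ is no longer small. From (\ref{stimac}) and the vanishing of $E_l$ together with its first derivative at $v=0$, one already has $E_l(q_0\nu)\ge\tfrac12 c^{\,2}q_0^{\,2}$ and $D_qE_l(q_0\nu)\ge c^{\,2}q_0>0$; combined with the uniform one-sided bound $D_{qq}E_l(q\nu)\ge -C$ on $q\in[q_0,M'']$ (available because $\|q\nu\|_\infty\le M''$ keeps the integrand pointwise bounded), this propagates the lower bound to an interval $[q_0,q_0+\eta]$ via a second-order Taylor expansion. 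To cover the full range $q\in[q_0,M'']$ uniformly in $l>1$ I would argue by contradiction and compactness: a failing sequence $v_n\in\mathcal B_{l_n}$ with $\|v_n\|_{l_n}\ge q_0$ and $E_{l_n}(v_n)$ below $\tfrac12 c^{\,2}q_0^{\,2}$ would, after zero extension and passage to a weak-$H^{1}$ limit on $\R$, yield a nontrivial odd perturbation $v^*$ with $e_\infty(\bar u+v^*)\le e_\infty(\bar u)$, contradicting the strict minimality of $\bar u$ among odd $H^{1}(\R)$ perturbations vanishing at infinity.
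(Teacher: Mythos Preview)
Your treatment of \eqref{stimaa} and \eqref{stimac} matches the paper's: integrate by parts against the Euler--Lagrange equation for $\bar u$ to kill the linear term, establish the spectral gap for $L=-\partial_{x_1}^{2}+W''(\bar u)$ on odd $H^{1}(\R)$ functions via the even positive ground state $\bar u_{x_1}$, and control the cubic remainder through the one--dimensional Sobolev estimate combined with the constraint $\|v\|_{1,l}\le M''$. Your argument for \eqref{stimac} is in fact slightly more complete than the paper's, which only records $D_{qq}E_l(q\nu)\big|_{q=0}\ge 2c_1^{2}$.

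For \eqref{stimab} the paper also argues by compactness, but your contradiction has a logical slip. If \eqref{stimab} fails for the constant $c$ you have already fixed from \eqref{stimaa}, lower semicontinuity along a failing sequence only yields $E_\infty(v^*)\le\liminf E_{l_n}(v_n)\le\tfrac12 c^{2}q_0^{2}$, not $E_\infty(v^*)\le0$; so the inequality $e_\infty(\bar u+v^*)\le e_\infty(\bar u)$ does not follow and there is no contradiction with strict minimality of $\bar u$. The paper decouples the two constants: it first shows that the constrained minimum $\alpha_l:=\min\{E_l(v):v\in\mathcal B_l,\ \|v\|_l\ge q_0\}$ is positive for each $l$, is nonincreasing in $l$ (by zero extension), and has a strictly positive limit $\alpha$; the compactness argument is used only for this last point, where the limit $\bar v$ of the extended minimisers satisfies $E_\infty(\bar v)\le\alpha$ together with $\|\bar v\|_{L^{2}(\R)}\ge q_0$, hence $\bar v\neq0$ and $\alpha\ge E_\infty(\bar v)>0$. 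Only then does one set $c_2$ via $\tfrac12 c_2^{2}q_0^{2}=\alpha$ and take $c=\min\{c_1,c_2\}$. With this reorganisation your Taylor--propagation step on $[q_0,q_0+\eta]$ becomes superfluous. One further point you leave open (and so does the paper): you assert the weak $H^{1}$ limit is ``nontrivial'', and the paper asserts strong $L^{2}(\R)$ convergence of the extended minimisers, but since $H^{1}(\R)\hookrightarrow L^{2}(\R)$ is not compact this needs an argument, for instance exploiting that $W''(\bar u(s))\to W''(\pm1)>0$ as $|s|\to\infty$ so that $L^{2}$ mass drifting to infinity still contributes a definite positive amount to $E_{l_n}$.
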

\begin{proof}
From (\ref{odd-solution}) and $v(\pm l)=0$ it follows
\begin{equation}\label{E2}
\int_{-l}^l(\overline {u}_{x_1} v_{x_1}+W^\prime(\overline{u}) v)dx_1= \int_{-l}^l(-\overline{u}_{x_1x_1}+W^\prime(\overline{u}))v dx_1 =0.
\end{equation}
Therefore, for $v\in {\mathcal B}_l,$ we can rewrite $E_l(v)$ in the form

\begin{equation}\label{E3}
\begin{array}{l}
\displaystyle{
E_l(v)=\int_{-l}^l \Big (\frac 1 2 W^{\prime\prime}(\overline{u})v^2+\frac{v_{x_1}^2} 2\Big )dx_1
}\\
\hspace{1 cm}\displaystyle{
+\int_{-l}^l\Big [W(\overline {u}+v)-W(\overline{u})
-W^\prime(\overline{u})v-\frac 1 2 W^{\prime\prime}(\overline{u})v^2 \Big ] dx_1,}
\end{array}
\end{equation}
where we have also added and subtracted $\frac 1 2 W^{\prime\prime}(\overline{u})v^2$.

By differentiating (\ref{odd-solution}) we see that $\bar{u}_{x_1}$ is an eigenfunction corresponding to the eigenvalue $\lambda=0$ for the operator $L$ defined by

$$L:W^{1,2}( \R )\subset L^2(       \R)\rightarrow  L^2(       \R),$$
$$Lw:= -w_{x_1x_1}
+W^{\prime\prime}(\overline{u})w.$$

Since $\bar{u}$ is increasing and odd, $\bar{u}_{x_1}$ is positive and even. On the other hand the assumption $W^{\prime\prime}(\pm 1)>0$ implies, see e.g. Theorem A.2 of \cite{Henry} (pag. 140), that the essential spectrum of $L$ is contained in $[a,+\infty)$ for some $a>0$. Therefore, if we restrict to the subset of odd functions we can conclude that there exists a positive constant $c_1$ such that

\begin{equation}\label{E4}
\int_{-\infty}^{+\infty } \Big (\frac 1 2 W^{\prime\prime}(\overline{u})\phi^2+\frac{\phi_{x_1}^2} 2\Big )dx_1= \frac{1}{2}\int_{-\infty }^{+\infty} (L\phi ) \phi dx_1 \ge c_1^2\int_{-\infty}^{+\infty } \phi^2 dx_1,\;\text{ for all }\;\phi\in  W_{\rm odd}^{1,2}(\R ).
\end{equation}
In particular, given $v\in {\mathcal B}_l,$ we can apply (\ref{E4}) to the trivial extension $\tilde{v}$ of $v$ to obtain
  \begin{equation}\label{E4bis}
\int_{-l}^{+l } \Big (\frac 1 2 W^{\prime\prime}(\overline{u})v^2+\frac{v_{x_1}^2} 2\Big )dx_1\ge c_1^2\int_{-l}^{+l } v^2 dx_1,\;\text{ for all }\;v\in{\mathcal B}_l.
\end{equation}

Since $v\in{\mathcal B}_l$ implies $v(-l)=0$, we have $v^2(x_1)=2\int_{-l}^{x_1}v(s)v_{x_1}(s) ds$ and therefore
\begin{equation}\label{NG}
 \Vert v\Vert_{L^\infty(-l,l)}\leq \sqrt{2}\Vert v\Vert_l^{1/2}\Vert v\Vert_{1,l}^{1/2}\leq C\Vert v\Vert_l^{1/2},\;\text{ for }\;v\in{\mathcal B}_l,
\end{equation}
with $C=\sqrt{2 M^{\prime\prime}}$. Fix $q_0>0$ and let $\overline{W}^{\prime\prime\prime}=\max_{\vert s\vert\leq 1+C q_0^{1/2}}\vert W^{\prime\prime\prime}(s)\vert$. Then, for some map $x_1\rightarrow\theta(x_1)\in(0,1)$, we have

\begin{eqnarray}\label{E5}\\\nonumber
&&\left\vert\int_{-l}^l(W(\overline {u}+v)-W(\overline{u})
-W^\prime(\overline{u})v-\frac 1 2 W^{\prime\prime}(\overline{u})v^2) d x_1\right \vert
=\left \vert\int_{-l}^l W^{\prime\prime\prime}(\overline{u}+\theta v)\frac {v^3} 6 d x_1\right\vert\\\nonumber
&&\hskip6cm\leq\frac{1}{6}C\overline{W}^{\prime\prime\prime}q_0^{1/2}\int_{-l}^lv^2 d x_1,\;\text{ for }\;\Vert v\Vert_l\leq q_0.
\end{eqnarray}
From (\ref{E3}), (\ref{E4bis}) and (\ref{E5}), if we choose $q_0>0$ so small that $C\overline{W}^{\prime\prime\prime}q_0^{1/2}\leq 3c_1^2$, it follows

$$E_l(q\nu)=E_l(v)\ge \frac 1 2 c_1^2\int_{-l}^l v^2 dx_1,\;\text{ for }\;v\in{\mathcal B}_l,\;0<q=\Vert v\Vert_l\leq q_0 ,$$
that is
(\ref{stimaa}).

To show (\ref{stimab}) let us consider the minimization problem
\begin{equation}\label{Pstar}
\min_{
\begin{array}{l}
v\in {\mathcal B}_l
\\
\Vert v\Vert_l\ge q_0
\end{array}
} E_l(v)\,.
\end{equation}
It is easy to construct a smooth odd map $w\in{\mathcal B}_l$ that satisfies the constraint $\Vert w\Vert_l\ge q_0$. Therefore there exists a minimizing sequence $\{v_j\}\subset{\mathcal B}_l$ that satisfies $\Vert v_j\Vert_l\ge q_0,\;j\in\nat ,$ and
\begin{eqnarray}\label{bound-constraint}
E_l(v_j)\leq E_l(w),\;j\in\nat .
\end{eqnarray}
From (\ref{bound-constraint}) and standard arguments from variational calculus it follows that there is $\overline{v}_l\in{\mathcal B}_l$ and a subsequence $\{v_{j_h}\}$ such that
\begin{eqnarray}\label{liminf-seq}
&&\liminf_{h\rightarrow+\infty}E_l(v_{j_h})\geq E_l(\overline{v}_l),\\\nonumber
&&\lim_{h\rightarrow+\infty}\Vert v_{j_h}- \overline{v}_l\Vert_l=0.
\end{eqnarray}
It follows $\Vert \overline{v}_l\Vert_l\ge q_0$ and $\overline{v}_l$ is a minimizer of (\ref{Pstar}).
Since $E_l(0)=0$ and $v=0$ is the unique minimizer of $E_l$ on ${\mathcal B}_l,$ this implies $E_l(\overline {v}_l)=\alpha_l>0,$ and therefore
\begin{equation}\label{E6}
E_l(q\nu )\ge \alpha_l, \quad\mbox{for}\ q\ge q_0.
\end{equation}
Note that $\alpha_l$ is non increasing  with $l.$
Indeed, if $l_1<l_2$ and $v\in {\mathcal B}_{l_1},$ then the trivial extension $\tilde{v}$ of $v$ to $[-l_2,l_2]$ satisfies $E_{l_2}(\tilde{v})=E_{l_1}(v)$ and belongs to ${\mathcal B}_{l_2}.$
Therefore, there exists $\lim_{l\rightarrow +\infty} \alpha_l.$
We claim that
\begin{equation}\label{limitpos}
\lim_{l\rightarrow +\infty} \alpha_l=\alpha >0.
\end{equation}

Let $\{l_k\}_k$ be a sequence of positive numbers such that
$l_k\rightarrow +\infty$ for $k\rightarrow +\infty$.
Let $\overline{v}_k$ be a minimizer of problem (\ref{Pstar})
for $l=l_k$ and let $\tilde{\overline{v}}_k:\R\rightarrow\R$ be the trivial extension of
$\overline{v}_k,$ we may assume that the sequence $\{\tilde{\overline{v}}_k\}_k$ converges in $L^2(\R)$ and weakly
in $W^{1,2}(\R )$ to a map $\overline{v}$ which satisfies, by lower semicontinuity of $E_\infty$,
$\Vert\overline{v}\Vert_\infty\geq q_0$ and $\alpha\geq E_\infty(\overline{v})$. Since $v=0$ is the unique minimizer of $E_\infty ,$ this implies $\alpha\geq E_\infty(\overline{v})>0$ and proves
 (\ref{limitpos}).
From (\ref{E6}) we then deduce
$$
E_l (q\nu)\ge \alpha =\frac 1 2 c_2^2 q_0^2,$$
for a suitable constant $c_2$ independent of $l.$
Then both (\ref{stimaa}) and (\ref{stimab}) hold with $c:=\min \{c_1, c_2\}.$

To prove (\ref{stimac}) we note that  setting $v=q\nu$ in (\ref{E3}) yields
\begin{equation}\label{E31}
\begin{array}{l}
\displaystyle{
E_l(q\nu)=q^2\int_{-l}^l \Big (\frac 1 2 W^{\prime\prime}(\overline{u})\nu^2+\frac{\nu_{x_1}^2} 2\Big )dx_1
}\\
\hspace{1 cm}\displaystyle{
+\int_{-l}^l\Big [W(\overline {u}+q\nu)-W(\overline{u})
-q W^\prime(\overline{u})\nu-\frac 1 2 q^2W^{\prime\prime}(\overline{u})\nu^2 \Big ] dx_1,}
\end{array}
\end{equation}
which via (\ref{E4bis}) implies
\begin{eqnarray*}
D_{qq}E_l(q\nu)|_{q=0}=\int_{-l}^l \Big ( W^{\prime\prime}(\overline{u})\nu^2+\nu_{x_1}^2\Big )dx_1\geq 2c_1^2.
\end{eqnarray*}
This concludes the proof.
\end{proof}
For  $r>0, l>0$ and $\eta\in\R ,$
we denote by $C_l^r(\eta)$ the set
\begin{equation}\label{Clr}
C_l^r(\eta):=
\{(x_1, x_2)\in\R^2\,:\, \vert x_1\vert< l,\, \vert x_2-\eta\vert< r\}.
\end{equation}
In the following, whenever possible, we assume that by a translation we can reduce to the case $\eta=0$ and write simply $C_l^r$ instead of $C_l^r(0)$.

The introduction of the map $E_l$ allows to represent the energy $J_{C_l^r}(v)$ of an odd map $v:C_l^r\rightarrow\R$  that satisfies $v(x)=\bar{u}(x_1),\;\text{ for }\;x_1=\pm l,\;\vert x_2\vert<r$ in a particular form that we now derive. We have
\begin{eqnarray}\label{first-energy}
J_{C_l^r}(v)=\frac{1}{2}\int_{-r}^r\int_{-l}^l\vert v_{x_2}\vert^2 dx_1 dx_2+\int_{-r}^r E_l(v-\bar{u}) dx_2+\int_{-r}^re_l(\bar{u}) dx_2.
\end{eqnarray}
If we set
\[q^v(x_2):=\|v(\cdot,x_2)-\bar{u}(\cdot)\|_l>0\]
then  $v-\bar{u}=q^v\nu^v$ with $\nu^v=\frac{v-\bar{u}}{\|v-\bar{u}\|_l}$. We observe that $v_{x_2}=q_{x_2}^v\nu^v+q^v\nu_{x_2}^v$ implies
\begin{eqnarray}\label{kinetic-decomposition}
\int_{-l}^l\vert v_{x_2}\vert^2 dx_1=\vert q_{x_2}^v\vert^2+(q^v)^2\int_{-l}^l\vert\nu_{x_2}^v\vert^2 dx_1,
\end{eqnarray}
where we have also used $\int_{-l}^l\nu_{x_2}^v\nu^v dx_1=0$. It follows
\begin{eqnarray}\label{last-energy0}
\hspace{0.8cm}
J_{C_l^r}(v)=\frac{1}{2}\int_{-r}^r(\vert q_{x_2}^v\vert^2+(q^v)^2\|\nu_{x_2}^v\|_l^2) dx_2+\int_{-r}^rE_l(q^v\nu^v) dx_2+\int_{-r}^re_l(\bar{u}) dx_2.
\end{eqnarray}
Assume now that $w:C_l^r\rightarrow\R$  is of the form \[w(x_1,x_2)=\bar{u}(x_1)+q^w(x_2)\nu^v(x_1,x_2) ;\]
 then we have
\begin{eqnarray}\label{last-energy}
\hspace{0.5cm}
J_{C_l^r}(w)=\frac{1}{2}\int_{-r}^r(\vert q_{x_2}^w\vert^2+(q^w)^2\|\nu_{x_2}^v\|_l^2) dx_2+\int_{-r}^rE_l(q^w\nu^v) dx_2+\int_{-r}^re_l(\bar{u}) dx_2.
\end{eqnarray}
For later reference we state
\begin{lemma}\label{linfty}
Let $f:[-l,l]\rightarrow\R$ be a Lipschitz continuous function satisfying
\begin{eqnarray}\label{assumption}
\vert f(s)\vert+\vert f^\prime(s)\vert\leq K e^{-k\vert s\vert},\;\text{ for }\; s\in(-l,l).
\end{eqnarray}
Then,  there is a constant $C_2>0$ independent of $l\geq1$ such that
\begin{eqnarray}\label{linfty1}
\|f\|_{L^\infty}\leq C_2\|f\|_l^\frac{2}{3}
\end{eqnarray}
\end{lemma}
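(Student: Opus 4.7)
The plan is to view this as a Gagliardo--Nirenberg--type interpolation of the form $\|f\|_{L^\infty}^{3}\leq C\|f'\|_{L^\infty}\|f\|_l^{2}$. The hypothesis $|f(s)|+|f'(s)|\leq K e^{-k|s|}$ will enter in only two ways: first to extract the uniform Lipschitz bound $|f'|\leq K$, and second, evaluated at a maximum point of $|f|$, to give the crude inequality $\|f\|_{L^\infty}\leq K$, which together with $l\geq 1$ prevents the maximizer from being pathologically close to the boundary $\pm l$.

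\textbf{Key steps.} I would set $\delta:=\|f\|_{L^\infty(-l,l)}$ and $m:=\|f\|_l$, and pick $s_0\in[-l,l]$ with $|f(s_0)|=\delta$, which exists by continuity on a compact interval. Replacing $f$ by $-f$ if necessary, assume $f(s_0)=\delta$. Using $|f'|\leq K$ and integrating along the line, derive $f(s)\geq \delta-K|s-s_0|$, whence $f(s)\geq \delta/2$ on the open interval $J:=(s_0-\delta/(2K),\,s_0+\delta/(2K))$. The decisive step is to show that the Lebesgue measure of $J\cap(-l,l)$ is at least $\delta/(2K)$ uniformly in $l$: a short case analysis, based on $\delta\leq K$ (from the hypothesis evaluated at $s_0$) and $l\geq 1$, shows that $\delta/(2K)\leq 1/2\leq l$, so regardless of which side the boundary may cut off, the ``interior half'' of $J$ about $s_0$ always survives inside $(-l,l)$. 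With this in hand, one integrates
\[
m^{2}\;=\;\int_{-l}^{l} f^{2}\,ds\;\geq\;\int_{J\cap(-l,l)}(\delta/2)^{2}\,ds\;\geq\;\frac{\delta^{2}}{4}\cdot\frac{\delta}{2K}\;=\;\frac{\delta^{3}}{8K},
\]
which yields $\delta\leq (8K)^{1/3} m^{2/3}$, so that $C_{2}=(8K)^{1/3}$ is admissible and is visibly independent of $l$.

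\textbf{Main obstacle.} The only delicate point is the case analysis guaranteeing the lower bound $\delta/(2K)$ on $|J\cap(-l,l)|$ uniformly in $l$; this is precisely the reason the assumption $l\geq 1$ appears in the statement, since together with $\delta\leq K$ it forces $\delta/(2K)\leq 1/2\leq l$. Notably, the full exponential decay of $f$ and $f'$ is not needed beyond the gross bounds $\|f'\|_{\infty}\leq K$ and $\delta\leq K$ that it immediately implies, which also confirms that $C_{2}$ depends only on $K$, not on $k$ or on $l$. Beyond this bookkeeping I do not anticipate any further obstacle: the argument reduces to a standard one-dimensional $L^{\infty}$--$L^{2}$ interpolation adapted to the geometry of $[-l,l]$.
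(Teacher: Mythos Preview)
Your argument is correct and follows essentially the same route as the paper: pick a maximizer $s_0$ of $|f|$, use the Lipschitz bound $|f'|\le K$ to get $|f(s)|\ge \delta-K|s-s_0|$ on a subinterval of length comparable to $\delta/K$, and integrate to obtain $\|f\|_l^2\gtrsim \delta^3/K$. The paper compresses the boundary case analysis into the phrase ``a simple computation,'' whereas you spell out why $\delta\le K$ together with $l\ge 1$ guarantees that at least half of the interval $J$ lies inside $(-l,l)$; this extra care is exactly what makes $C_2$ independent of $l$ and is a welcome clarification.
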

\begin{proof}
From (\ref{assumption}) there is $\bar{s}\in[-l,l]$ such that $\vert f(s)\vert\leq m:=\vert f(\bar{s})\vert,\;s\in[-l,l]$. From this and $\vert f^\prime(s)\vert\leq K$ it follows
\[\vert f(s)\vert\geq m-K\vert s-\bar{s}\vert,\;\text{ for }\;s\in[-l,l]\cap[\bar{s}-m/K,\bar{s}+m/K]\]
and a simple computation gives (\ref{linfty1}).
\end{proof}

\begin{lemma}\label{elle-zero}
There exist $J_0>0,\ C>0,$ $\ k>0$ and a map $(0,\infty)\ni r\rightarrow l_r>0$ such that, given $r>0$, if $l\ge l_r$ and
\begin{equation}\label{Rlarge}
C_l^r\subset\Omega,\;\;d(C_l^r, \partial \Omega)>l ,
\end{equation} then there is
a Lipschitz continuous function $v$ with the following properties:
\begin{description}
\item[(i)] $v(x)=\overline{u} (x_1),\quad$ for $x\in
\partial C_{l+\delta/2}^{r+\delta/2};$
\item[(ii)] $v(x)=u(x),\quad$ for $x\in \overline{C}_{l,
r}$ and $x\in \Omega\setminus C_{l+\delta}^{r+\delta};$
\item[(iii)] $\|v(\cdot,x_2)-u(\cdot,x_2)\|_{l+\delta/2}\leq C e^{-k l},\;\text{ for }\;x_2\in[-r,r],$
\item[(iv)] $J_{\overline{C_{l+\delta}^{r+\delta}}\setminus C_l^r}(v)-J_{\overline{C_{l+\delta}^{r+\delta}}\setminus C_l^r}(u)\le J_0,$
\end{description}
where $\delta>0$ is a fixed constant.
 \end{lemma}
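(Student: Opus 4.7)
The plan is to build $v$ as a cut-off interpolation between $u$ and the one-dimensional profile $\bar u(x_1)$. Let $d_\infty(x)$ denote the sup-norm distance from $x\in\R^2$ to $\overline{C_l^r}$, i.e.\ $d_\infty(x)=\max\{|x_1|-l,\,|x_2|-r,\,0\}$, and set
\[
\phi(x):=\max\!\Big\{0,\,1-\big|\tfrac{2\,d_\infty(x)}{\delta}-1\big|\Big\},\qquad v(x):=u(x)+\phi(x)\bigl(\bar u(x_1)-u(x)\bigr).
\]
Then $\phi\in[0,1]$ vanishes on $\overline{C_l^r}$ and outside $\overline{C_{l+\delta}^{r+\delta}}$, equals $1$ on $\partial C_{l+\delta/2}^{r+\delta/2}$, and $|\nabla\phi|\le 2/\delta$ a.e.; properties (i) and (ii) are then immediate and $v$ is Lipschitz.

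The quantitative input is the $C^1$-upgrade \eqref{exp-upgrade} of the estimate from Theorem \ref{main}. The hypothesis $d(C_l^r,\partial\Omega)>l$ forces $d(x,\partial\Omega)>l/2$ throughout $C_{l+\delta}^{r+\delta}$ as soon as $l\ge 2\sqrt 2\,\delta$, so in $\Omega^+$ one has $d(x,\partial\Omega^+)\ge\min(x_1,l/2)$. Combining \eqref{exp-upgrade} with the symmetry $u(\hat x)=-u(x)$ and the standard exponential decay of $\bar u$ at $\pm 1$ gives, for all $x\in C_{l+\delta}^{r+\delta}$,
\[
|u(x)-\bar u(x_1)|+|\nabla u(x)-\bar u'(x_1)\mathbf{e}_1|\le K_1\,e^{-k_1\min(|x_1|,\,l/2)}.
\]
Property (iii) is an immediate consequence: for $|x_2|\le r$, $v-u=\phi(\bar u-u)$ is supported in $l<|x_1|<l+\delta/2$, where the right-hand side above is $\le K_1 e^{-k_1 l/2}$; integrating in $x_1$ yields $\|v(\cdot,x_2)-u(\cdot,x_2)\|_{l+\delta/2}\le Ce^{-kl}$ upon relabelling constants.

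For (iv), set $A:=\overline{C_{l+\delta}^{r+\delta}}\setminus C_l^r$ and $w:=v-u=\phi(\bar u-u)$ and expand $J_A(v)-J_A(u)=\int_A(\nabla u\cdot\nabla w+\tfrac12|\nabla w|^2+W(v)-W(u))\,dx$. Decompose $A$ into the horizontal band $H:=\{|x_1|\le l,\,r<|x_2|<r+\delta\}$, the vertical bands $\{l<|x_1|<l+\delta\}\cap A$, and the four corners. On the vertical bands and corners the displayed estimate together with $|\nabla\phi|\le 2/\delta$ bounds every integrand by $Ce^{-k_1 l}$; since the total area is $O(r\delta)$, this contribution is $O(re^{-k_1 l})$, which is $\le 1$ provided $l\ge l_r:=\max\{2\sqrt 2\,\delta,\,(3/k_1)\log(1+r)+C_0\}$. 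On $H$, integrating the displayed estimate in $x_1$ produces the $l$-uniform slice bounds
\[
\|\bar u-u(\cdot,x_2)\|_l^2\le\frac{4K_1^2}{k_1},\qquad\int_{-l}^l\bigl(|\nabla u|^2+(\bar u')^2\bigr)\,dx_1\le C;
\]
the first gives $\int_H|\nabla\phi|^2(\bar u-u)^2\,dx\le 16K_1^2/(k_1\delta)$, and together with $|W(v)-W(u)|\le C|v-u|$ and Cauchy--Schwarz on the cross term, this bounds $J_A(v)-J_A(u)$ by a constant $J_0$ depending only on $W,\delta,K_1,k_1$.

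The truly delicate point is clearly the horizontal band, where $|\bar u-u|$ is not pointwise small: naively, $|\nabla\phi|^2\le 4/\delta^2$ integrated against the $O(l\delta)$-area of $H$ would give an unbounded $O(l/\delta)$ contribution. The key is that $u$ and $\bar u$ converge exponentially to the \emph{same} limits $\pm 1$ as $|x_1|\to\infty$, which forces $\|\bar u-u(\cdot,x_2)\|_l^2$ to remain bounded uniformly in $l$; this single observation converts the would-be linear-in-$l$ divergence into the bounded $O(1/\delta)$ quantity above and is what makes the universal constant $J_0$ possible.
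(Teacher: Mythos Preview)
Your argument is correct and reaches the same conclusion as the paper, but the construction of $v$ is genuinely different. The paper cuts the frame $\overline{C_{l+\delta}^{r+\delta}}\setminus C_l^r$ into four trapezoids along the diagonals through the corners of $C_l^r$ and $C_{l+\delta}^{r+\delta}$, and in each trapezoid interpolates along rays in polar coordinates with poles at $(l-r,0)$ and $(0,r-l)$; in the side trapezoids $T_1$ it interpolates toward the \emph{constant} $\bar u(l+\delta/2)$, while in the top/bottom trapezoids $T_2$ it interpolates toward $\bar u(x_1)$. The $r,l$-independent bound $J_{T_2}(v)\le C_0$ is obtained from the ray inequality $|x_1(\varrho,\phi)|\ge l|\tan\phi|$, which makes the polar integral of the energy density converge uniformly in $l$. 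Your sup-norm cutoff $\phi$ produces an axis-aligned decomposition instead, and the analogue of the paper's polar estimate is your uniform slice bound $\|\bar u-u(\cdot,x_2)\|_l^2\le C$, which plays exactly the same role in killing the would-be $O(l/\delta)$ divergence on the horizontal band. Your construction is more elementary and avoids the explicit continuity check along the diagonal $\theta=\pi/4$ that the paper has to carry out; the paper's trapezoid geometry, on the other hand, makes the matching at corners a bit more explicit and interpolates toward a single constant on the vertical sides, but neither gain is essential here.
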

\begin{proof}
We set
\begin{eqnarray}\label{cr-cr}
v=u\;\text{ for }\;x\in \overline{C_l^r}\cup(\Omega\setminus C_{l+\delta}^{r+\delta}).
\end{eqnarray}
To define $v$ in $C_{l+\delta}^{r+\delta}\setminus\overline{C_l^r}$
let $S_1\subset\R^2$ be the sector $S_1=\{x:x_1\geq l-r,\;\vert x_2\vert< x_1-l+r\}$ and let $(\rho,\theta)$ polar coordinates in $S_1$ with origin in the vertex $(l-r,0)$ of $S_1$ and polar axis parallel to $x_1$. We let $x(\rho,\theta)$ denote the point of $S_1$ with polar coordinates $(\rho,\theta)$. We define $v$ in the trapezoid $T_1:=(C_{l+\delta}^{r+\delta}\setminus\overline{C_l^r})\cap S_1$ by setting
\begin{equation}\label{defv1}
\begin{array}{l}
v(x(\rho,\theta)):=
\Big (
1-\Big\vert 1-2\frac {\rho-\rho_1(\theta)} {\rho_2(\theta)-\rho_1(\theta)}\Big\vert \Big )\overline{u} (l+\delta/2)
+\Big\vert  1-2\frac {\rho-\rho_1(\theta)} {\rho_2(\theta)-\rho_1(\theta)}\Big\vert u(x(\rho,\theta)),\\\\
\medskip
\quad \hspace{6.5 cm}\mbox{\rm for }\
\rho\in (\rho_1(\theta),\rho_2(\theta)),\: \vert\theta\vert\leq\frac{\pi}{4},
\end{array}
\end{equation}
where $\rho_1(\theta)$, and $\rho_2(\theta)$ are defined by the conditions $x_1(\rho_1(\theta),\theta)=l$ and $x_1(\rho_2(\theta),\theta)=l+\delta$.

In the trapezoid $T_2:=(C_{l+\delta}^{r+\delta}\setminus\overline{C_l^r})\cap S_2$, $S_2=\{x:x_2\geq r-l,\;\vert x_1\vert< x_2-r+l\}$ we define

\begin{equation}\label{defv2}
\begin{array}{l}
v(x(\varrho,\phi)):=
\Big (
1-\Big\vert 1-2\frac {\varrho-\varrho_1(\phi)} {\varrho_2(\phi)-\varrho_1(\phi)}\Big\vert \Big )\overline{u} (x_1(\frac{\varrho_1(\phi)+\varrho_2(\phi)}{2},\phi))\\\\
\hspace{4cm}
+\Big\vert  1-2\frac {\varrho-\varrho_1(\phi)} {\varrho_2(\phi)-\varrho_1(\phi)}\Big\vert u(x(\varrho,\phi)),
\quad \mbox{\rm for }\
\varrho\in (\varrho_1(\phi),\varrho_2(\phi)),\: \vert\phi\vert\leq\frac{\pi}{4},
\end{array}
\end{equation}
where $(\varrho,\phi)$ are polar coordinates in $S_2$ and $\varrho_1(\phi)$, and $\varrho_2(\phi)$ are defined by the conditions $x_2(\varrho_1(\phi),\phi)=r$ and $x_2(\varrho_2(\phi),\phi)=r+\delta$. In the remaining two trapezoids we define $v$ in a similar way.

The maps defined by (\ref{cr-cr}), (\ref{defv1}) and (\ref{defv2}) are Lipschitz continuous in the closure of their domains of definition and join continuously on the boundary of $C_{l+\delta}^{r+\delta}\setminus\overline{C_l^r}$ and along the line $\theta=\pi/4$. Indeed (\ref{defv1}) and (\ref{defv2}) yield
\begin{eqnarray*}
\left.\begin{array}{l}
\rho=\rho_i(\theta)\\
\varrho=\varrho_i(\phi)
\end{array}\right.
\Rightarrow\left.\begin{array}{l}
v(x(\rho_i(\theta),\theta))=u(x(\rho_i(\theta),\theta)),\\
v(x(\varrho_i(\phi),\phi))=u(x(\varrho_i(\phi),\phi))
\end{array}\right.\;i=1,2.
\end{eqnarray*}
and
\begin{eqnarray*}
&& x(s+\rho_1(\pi/4),\pi/4)=x(s+\varrho_1(\pi/4),\pi/4),\\\Rightarrow\\
&&v(x(s+\rho_1(\pi/4),\pi/4))=v(x(s+\varrho_1(\pi/4),\pi/4)),\;\;s\in[0,\sqrt{2}\delta].
\end{eqnarray*}
Therefore we conclude that, as defined, the map $v$ is uniformly Lipschitz continuous on $\Omega$.
The fact that $v$ satisfies (i) follows from (\ref{defv1}) and (\ref{defv2}) that imply
\begin{eqnarray*}
\left.\begin{array}{l}
\rho=(\rho_1(\theta)+\rho_2(\theta))/2\\
\varrho=(\varrho_1(\phi)+\varrho_2(\phi))/2
\end{array}\right.
\Rightarrow\left.\begin{array}{l}
v(x((\rho_1(\theta)+\rho_2(\theta))/2,\theta))=\overline{u}(l+\delta/2),\\
v(x((\varrho_1(\phi)+\varrho_2(\phi))/2,\phi))=\overline{u}(x_1((\varrho_1(\phi)+\varrho_2(\phi))/2)).
\end{array}\right.
\end{eqnarray*}

To prove (iii) and (iv) we use the estimate
\begin{eqnarray}\label{exp-ubar}
\vert\overline{u}(s)-1\vert+\vert\overline{u}^\prime(s)\vert\leq K e^{-k s},\;\text{ for }\;s\geq 0,
\end{eqnarray}
and the estimate for the solution $u$ established in (\ref{exp-upgrade}). Set $\lambda:=\Big\vert 1-2\frac {\rho-\rho_1(\theta)} {\rho_2(\theta)-\rho_1(\theta)}\Big\vert\in[0,1]$, then (\ref{defv1}) implies
\begin{eqnarray}\label{with-lambda}
v-1=(1-\lambda)(\overline{u}(l+\delta/2)-1)+\lambda (u(x(\rho,\theta))-1).
\end{eqnarray}
This, $x_1(\rho,\theta)>l\;\text{ on }\;T_1$, (\ref{exp-ubar}) and (\ref{exp-upgrade}) imply $\vert v-1\vert\leq K e^{-kl}\;\text{ on }\;T_1$ and therefore
(iii) follows. Moreover,
since $W(s)=O((s-1)^2)$ for $s-1$ small, it results
\begin{eqnarray}\label{wv-wu}
\int_{T_1}(W(v)-W(u)) dx\leq \int_{T_1}W(v) dx\leq C r \delta e^{-\gamma l},
\end{eqnarray}
where $\gamma, C$ denote a generic positive constants independent of $r$ and $l$. Differentiating (\ref{defv1}) in $x$ yields
\begin{eqnarray}\label{dv-du}
% \nonumber to remove numbering (before each equation)
  \nabla v = (1-\lambda)\overline{u}^\prime(l+\delta/2)e_1+\lambda\nabla u(x(\rho,\theta))
  -(\overline{u}(l+\delta/2)-u(x(\rho,\theta)))\nabla\lambda,
\end{eqnarray}
where $e_i,\;i=1,2$ is the standard basis of $\R^2$. Since $\nabla\lambda$ is bounded on $T_1$ with a bound independent of $r$ and $l$, using again (\ref{exp-ubar}) and (\ref{exp-upgrade}) we see that (\ref{dv-du}) implies
\begin{equation}\label{dv-dv1}
   \int_{T_1}\frac{1}{2}(\vert\nabla v\vert^2-\vert\nabla u\vert^2) dx\leq\int_{T_1}\frac{1}{2}\vert\nabla v\vert^2 dx
   \leq C r \delta e^{-\gamma l}.
\end{equation}
To estimate $J_{T_2}(v)-J_{T_2}(u)$ we proceed in a similar way. We set $\lambda=\Big\vert 1-2\frac {\varrho-\varrho_1(\phi)} {\varrho_2(\phi)-\varrho_1(\phi)}\Big\vert$ and write equations analogous to (\ref{with-lambda}) and (\ref{dv-du}). From these equations, using as before the estimates (\ref{exp-ubar}) and (\ref{exp-upgrade}), and observing that
\begin{equation}\label{x1-large}
    \varrho\in(\varrho_1(\phi), \varrho_2(\phi))\;\Rightarrow\;\vert x_1(\varrho,\phi)\vert\geq\vert x_1(\varrho_1(\phi),\phi)\vert=l\vert\tan{\phi}\vert,
\end{equation}
it follows that there is a constant $C_0$ independent of $r$ and $l$ such that $J_{T_2}(v)-J_{T_2}(u)\leq J_{T_2}(v)\leq C_0$. This,  (\ref{wv-wu}) and (\ref{dv-dv1}) imply

\begin{equation}\label{ver1}
J_{\overline{C_{l+\delta}^{r+\delta}}\setminus C_l^r}(v)-J_{\overline{C_{l+\delta}^{r+\delta}}\setminus C_l^r}(u)\leq J_{\overline{C_{l+\delta}^{r+\delta}}\setminus C_l^r}(v)\leq 2(C_0+C r \delta e^{-\gamma l})
\end{equation}
and (iv) follows with $J_0=4 C_0$ and $l_r=-\frac{1}{\gamma}\ln{\frac{C r \delta}{C_0}}$.
\end{proof}

Arguments analogous to the ones in the proof of Lemma \ref{elle-zero} prove
\begin{lemma}\label{elle-zero1}
Assume that $C_l^r$ satisfies $(\ref{Rlarge}).$ Then there is
a Lipschitz continuous function $v$ with the following properties:
\begin{description}
\item[(i)] $v(x)=\overline{u} (x_1),\quad$ for $x\in\{-l-\delta/2,l+\delta/2\}\times[-r-\delta/2,r+\delta/2],$
\item[(ii)] $v(x)=u(x),\quad$ for $x\in\Omega\setminus((-l-\delta,-l)\cup(l,l+\delta))\times[-r-\delta,r+\delta], $
\item[(iii)] $\|v(\cdot,x_2)-u(\cdot,x_2)\|_{l+\delta/2}\leq C e^{-\gamma l},\;\text{ for }\;x_2\in[-r-\delta/2,r+\delta/2],$
\item[(iv)] $J(v)-J(u)\le C r e^{-\gamma l},$
\end{description}
for some constants $C, \gamma>0$.
 \end{lemma}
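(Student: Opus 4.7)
The plan is to construct $v$ by linear interpolation in the $x_1$-direction within two narrow vertical strips, a simpler construction than that of Lemma~\ref{elle-zero} since no corners or polar wedges are involved. Set $\mu(t) := |1 - 2t/\delta|$ on $[0,\delta]$, so that $\mu(0) = \mu(\delta) = 1$ and $\mu(\delta/2) = 0$. On the right strip $R := (l,l+\delta)\times[-r-\delta, r+\delta]$ define
$$v(x_1, x_2) := (1 - \mu(x_1-l))\,\bar{u}(l+\delta/2) + \mu(x_1-l)\,u(x_1,x_2),$$
and analogously on the left strip $L := (-l-\delta,-l)\times[-r-\delta, r+\delta]$, using $\bar{u}(-l-\delta/2)$ in place of $\bar{u}(l+\delta/2)$. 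Outside $R\cup L$, set $v := u$. Lipschitz regularity is immediate, $v = u$ on the boundaries $\{x_1 = \pm l\}$ and $\{x_1 = \pm(l+\delta)\}$ of the strips (since $\mu = 1$ there), and $v(\pm(l+\delta/2), x_2) = \bar{u}(\pm(l+\delta/2))$, so properties (i) and (ii) hold by construction.

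The quantitative content (iii) and (iv) rests on the fact that every point of $R \cup L$ has distance at least $l$ from $\partial\Omega^+$: indeed on $R$ we have $x_1 \geq l$, so the point is at distance at least $l$ from the symmetry hyperplane $\{x_1 = 0\} \cap \overline{\Omega}$, and at distance at least $l$ from $\partial\Omega$ by~(\ref{Rlarge}). Hence (\ref{exp-upgrade}) gives $|u-1| + |\nabla u| \leq K e^{-kl}$ on $R$, while (\ref{exp-ubar}) gives $|\bar{u}(l+\delta/2) - 1| \leq K e^{-kl}$. Combining these with $0 \leq \mu \leq 1$ yields $|v - u| \leq C e^{-kl}$ and $|v - 1| \leq C e^{-kl}$ pointwise on $R$; integrating $|v-u|^2$ over $x_1 \in (l, l+\delta/2)$ (and symmetrically on the left) gives (iii).

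For (iv), since $v = u$ off $R\cup L$, it suffices to bound $J_{R\cup L}(v) - J_{R\cup L}(u)$. Using $W(s) = O((s-1)^2)$ near $s = 1$ together with the pointwise bound $|v-1|, |u-1| \leq Ce^{-kl}$ gives $W(v), W(u) = O(e^{-2kl})$, so the potential contribution is $O(r\delta e^{-2kl})$. For the gradient, differentiating the defining formula and using $|\mu'| \leq 2/\delta$ yields
$$|\nabla v| \leq |\mu'(x_1-l)|\,|u - \bar{u}(l+\delta/2)| + \mu(x_1-l)\,|\nabla u| \leq C \delta^{-1} e^{-kl},$$
with the same bound for $|\nabla u|$ from (\ref{exp-upgrade}). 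Integrating $|\nabla v|^2$ and $|\nabla u|^2$ over the two strips (total area $\sim r\delta$) bounds the kinetic contribution by $C r \delta^{-1} e^{-2kl}$. Since $\delta > 0$ is a fixed constant, summing the kinetic and potential contributions gives (iv) with any $\gamma \in (0, 2k)$. I do not foresee any serious obstacle: the only conceptually new point relative to Lemma~\ref{elle-zero} is the distance estimate $d(x, \partial\Omega^+) \geq l$ used throughout, which follows directly from (\ref{Rlarge}) together with $|x_1| \geq l$ on $R \cup L$.
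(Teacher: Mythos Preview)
Your interpolation scheme and energy estimates are essentially what the paper has in mind (the paper gives no details beyond ``arguments analogous to Lemma~\ref{elle-zero}''), and your use of (\ref{exp-upgrade}) and (\ref{exp-ubar}) to control everything on $R\cup L$ is exactly right. There is, however, one oversight: your $v$ is \emph{not} Lipschitz as written. You check that $v=u$ on the vertical edges $\{x_1=\pm l\}$ and $\{x_1=\pm(l+\delta)\}$ of the strips, but on the horizontal edges $\{x_2=\pm(r+\delta)\}$ your formula gives $(1-\mu)\,\bar u(l+\delta/2)+\mu\, u$, which does not equal $u$ for $x_1\in(l,l+\delta)$. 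So $v$ has a jump (of size $O(e^{-kl})$, but a jump nonetheless) across those edges, and the map is not in $W^{1,2}$ across them.

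The repair is routine and costs nothing in the estimates. Property~(i) only demands $v=\bar u(x_1)$ on the \emph{shorter} segment $|x_2|\le r+\delta/2$, so the band $|x_2|\in(r+\delta/2,\,r+\delta)$ is available for tapering back to $u$. For instance, introduce a Lipschitz cut-off $\chi(x_2)$ with $\chi\equiv 1$ for $|x_2|\le r+\delta/2$, $\chi=0$ at $|x_2|=r+\delta$, $|\chi'|\le 2/\delta$, and set
\[
v \;=\; u \;+\; \chi(x_2)\,\bigl(1-\mu(x_1-l)\bigr)\bigl(\bar u(l+\delta/2)-u\bigr)
\quad\text{on }R,
\]
and analogously on $L$. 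Then $v=u$ on all of $\partial R$, (i) and (ii) hold, and (iii) is unchanged since $\chi\equiv 1$ on the relevant $x_2$-range. The new gradient term $\chi'(x_2)\cdot O(e^{-kl})$ is supported on a region of area $O(\delta^2)$ independent of $r$, so it adds only $O(e^{-2kl})$ to the energy and is absorbed into the bound of~(iv).
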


\begin{lemma}\label{a-set} Let $q_0$ and $c$ be as in Lemma $\ref{lemma1}.$
Given $\overline{q}< q_0,$  fix $r>0$ such that
\begin{equation}\label{venti1}
\frac 1 2 c^2 \overline{q}^2 r>8 J_0,
\end{equation}
where $J_0$ is the constant in (iv) in Lemma $\ref{elle-zero}.$ There is $l(\bar{q})>0$ such that, provided $(\ref{Rlarge})$ is satisfied with $l\geq\max\{l_r,l(\bar{q})\}$,
then there exist $a_-\in(-r,-r/2)$ and $a_+\in(r/2,r)$ such that
\begin{eqnarray}\label{the-lines}
\|u(\cdot,a_\pm)-\bar{u}\|_{l+\delta/2}<\bar{q}.
\end{eqnarray}
\end{lemma}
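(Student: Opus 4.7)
The plan is to argue by contradiction: I would show that if no $a_+\in(r/2,r)$ satisfies $\|u(\cdot,a_+)-\bar u\|_{l+\delta/2}<\bar q$, then $u$ is beaten in energy by an explicit competitor, contradicting its local minimality. The case of $a_-$ is entirely symmetric (replace $(r/2,r)$ by $(-r,-r/2)$ throughout), so I treat only $a_+$. Thus I assume $\|u(\cdot,x_2)-\bar u\|_{l+\delta/2}\ge\bar q$ for every $x_2\in(r/2,r)$.

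The key tool is Lemma \ref{elle-zero} (this is where the hypothesis $l\ge l_r$ enters). I would apply it to produce a Lipschitz map $w$ on $\Omega$ coinciding with $u$ on $\bar C_l^r$ and outside $C_{l+\delta}^{r+\delta}$, equal to $\bar u(x_1)$ on $\partial C_{l+\delta/2}^{r+\delta/2}$, with $\|w(\cdot,x_2)-u(\cdot,x_2)\|_{l+\delta/2}\le Ce^{-kl}$ on $[-r,r]$ and $J_\Omega(w)-J_\Omega(u)\le J_0$. The trapezoid construction in Lemma \ref{elle-zero} is symmetric, so $w$ inherits the oddness of $u$ in $x_1$ and hence $w(\cdot,x_2)-\bar u\in\mathcal B_{l+\delta/2}$ for every $x_2$. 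I then choose $l(\bar q)$ so that $Ce^{-k l(\bar q)}=\bar q/2$. For $l\ge l(\bar q)$ the triangle inequality and the standing assumption give $\|w(\cdot,x_2)-\bar u\|_{l+\delta/2}\ge\bar q/2$ on $(r/2,r)$; since $\bar q/2<q_0$, Lemma \ref{lemma1} ((\ref{stimaa})--(\ref{stimab}) applied with $q=\|w(\cdot,x_2)-\bar u\|_{l+\delta/2}$) yields $E_{l+\delta/2}(w(\cdot,x_2)-\bar u)\ge c^2\bar q^2/8$, and integrating gives
\[
\int_{r/2}^r E_{l+\delta/2}(w(\cdot,x_2)-\bar u)\,dx_2\;\ge\;\frac{c^2\bar q^2 r}{16}\;>\;J_0,
\]
where the last inequality uses (\ref{venti1}).

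For the matching upper bound I would introduce a second competitor $\tilde w$ defined by $\tilde w(x_1,x_2):=\bar u(x_1)$ on $C_{l+\delta/2}^{r+\delta/2}$ and $\tilde w:=w$ elsewhere. Property (i) of Lemma \ref{elle-zero} guarantees that $\tilde w$ and $w$ agree on $\partial C_{l+\delta/2}^{r+\delta/2}$, so $\tilde w$ is Lipschitz on $\Omega$; since $\tilde w=u$ outside $C_{l+\delta}^{r+\delta}$, it is an admissible test function against $u$. Local minimality of $u$, together with the obvious identity $J_\Omega(\tilde w)-J_\Omega(w)=J_{C_{l+\delta/2}^{r+\delta/2}}(\bar u)-J_{C_{l+\delta/2}^{r+\delta/2}}(w)$ and the bound $J_\Omega(w)-J_\Omega(u)\le J_0$, yields $J_{C_{l+\delta/2}^{r+\delta/2}}(w)-J_{C_{l+\delta/2}^{r+\delta/2}}(\bar u)\le J_0$. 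The representation (\ref{first-energy}) applied to $w$ on $C_{l+\delta/2}^{r+\delta/2}$ (which is legitimate precisely because $w=\bar u(x_1)$ at $x_1=\pm(l+\delta/2)$), combined with the nonnegativity of the kinetic term $\tfrac12\iint|w_{x_2}|^2$, then gives
\[
\int_{-r-\delta/2}^{r+\delta/2} E_{l+\delta/2}(w(\cdot,x_2)-\bar u)\,dx_2\;\le\;J_0,
\]
and in particular $\int_{r/2}^r E_{l+\delta/2}(w(\cdot,x_2)-\bar u)\,dx_2\le J_0$, contradicting the lower bound.

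The only delicate point to check carefully is that the $w$ produced by Lemma \ref{elle-zero} is admissible for both roles simultaneously: slicewise, $w(\cdot,x_2)-\bar u$ must lie in $\mathcal B_{l+\delta/2}$ so that Lemma \ref{lemma1} applies; globally, $w$ must serve as the reference function in the energy representation (\ref{first-energy}) on $C_{l+\delta/2}^{r+\delta/2}$. Both properties rest on the boundary identity $w=\bar u(x_1)$ at $x_1=\pm(l+\delta/2)$ from (i) of Lemma \ref{elle-zero} and on the symmetric (and hence odd-preserving) nature of its trapezoid construction. Once these facts are recorded, the contradiction is just the arithmetic tuning encoded in the factor $8$ of (\ref{venti1}).
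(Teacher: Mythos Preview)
Your proof is correct and follows essentially the same strategy as the paper's: build the modified map from Lemma~\ref{elle-zero}, compare its energy on $C_{l+\delta/2}^{r+\delta/2}$ with the competitor obtained by replacing it by $\bar u$ inside that rectangle, and exploit Lemma~\ref{lemma1} together with local minimality and the bound $J_0$. The only cosmetic difference is organizational: the paper phrases things as a measure estimate (it defines $\mathcal A_\eta=\{x_2:\|v(\cdot,x_2)-\bar u\|_{l+\delta/2}\ge \bar q/2\}$ and shows $|\mathcal A_\eta|<r/2$, whence the complement is nonempty in each half-interval), whereas you argue by direct contradiction on the interval $(r/2,r)$; the arithmetic and the ingredients are identical.
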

\begin{proof}
  Let $v$ the map constructed in Lemma \ref{elle-zero}.
 For each $\eta\in[-r,r/2]$ let $\mathcal{A}_\eta\subset\R$ be the set
\begin{equation}\label{calA}
{\mathcal A}_\eta :=\Big \{ x_2\in(\eta,\eta+r/2)\,:\,q^v(x_2)=\|v(\cdot,x_2)-\overline{u}\|_{l+\delta/2}
\geq\frac{\bar{q}}{2}\Big \}.
\end{equation}
Then, we have
\begin{equation}\label{venti2}
J_{C_{l+\delta /2}^{r+\delta /2}}(v)-J_{C_{l+\delta /2}^{r+\delta /2}}(\hat v )\ge \left\vert {\mathcal A}_\eta\right\vert \frac 1 2 c^2\frac{\overline{q}^2}{4},\;\text{ for }\;\eta\in[-r,r/2]
\,,
\end{equation}
where  $\hat v$ be the function that coincides with
$v$  outside $C_{l+\delta /2}^{r+\delta /2}$
and with $\bar{u}$ inside $C_{l+\delta /2}^{r+\delta /2}.$
Note that, since $v$ coincides with $\bar{u}$ on the boundary of $C_{l+\delta /2}^{r+\delta /2}$, $\hat{v}$ is a Lipschitz map.
To prove (\ref{venti2}), we observe that from the definition of $\hat{v}$ and of $E_l$ in (\ref{Econl})
we have (with $w=v-\bar{u}$)
$$
\begin{array}{l}
\displaystyle{J_{C_{l+\delta /2}^{r+\delta /2}}(v)-J_{C_{l+\delta /2}^{r+\delta /2}}(\hat v)=
\frac 1 2\int_{C_{l+\delta /2}^{r+\delta /2}}\vert w_{x_2}\vert^2 dx_1 dx_2+\int_{-r-\delta /2}^{r+\delta /2}E_{l+\delta /2}(w) dx_2
}\\
\hspace{4.5 cm}\displaystyle{
\geq\int_{-r}^r E_{l+\delta /2}(w) dx_2\geq\frac{1}{2}\vert{\mathcal A}_\eta\vert c^2\frac{\overline{q}^2}{4},\;\text{ for }\;\eta\in[-r,r/2]
}
\end{array}
$$
where we have also used (\ref{calA}) and  (\ref{stimaa}), (\ref{stimab}) in Lemma \ref{lemma1}.
Then, from Lemma \ref{elle-zero} and (\ref{venti2}), it follows
\begin{equation}\label{venti3}
0\ge J_{C_{l+\delta /2}^{r+\delta /2}}(v)-J_{C_{l+\delta /2}^{r+\delta /2}}(\hat v)\ge \left\vert {\mathcal A}_\eta\right\vert \frac 1 2 c^2\frac{\overline{q}^2}{4}-J_0> (\left\vert {\mathcal A}_\eta\right\vert-\frac{r}{2} )\frac 1 2 c^2 \frac{\overline{q}^2}{4},\;\text{ for }\;\eta\in[-r,r/2]
\end{equation}
and therefore
\begin{equation}\label{venti5}
\left\vert {\mathcal A}_\eta\right\vert <\frac{r}{2},\;\text{ for }\;\eta\in[-r,r/2]\,.
\end{equation}
This inequality and the definition (\ref{calA}) of ${\mathcal A}_\eta$ imply the existence of $a_-\in(-r,-r/2)\setminus{\mathcal A}_0$ and $a_+\in(r/2,r)\setminus{\mathcal A}_{3r/2}$ such that
\begin{eqnarray}
\|v(\cdot,a_\pm)-\bar{u}\|_{l+\delta /2}<\frac{\bar{q}}{2}.
\end{eqnarray}
This and (iii) in Lemma \ref{elle-zero} imply (\ref{the-lines}) provided $l\geq l(\bar{q}):=\frac{1}{k}\ln{\frac{2 C}{\bar{q}}}$.
\end{proof}

\begin{lemma}\label{de-exists}
Given $\epsilon>0$ there is $l_\epsilon$ such that
\begin{eqnarray*}
x\in\Omega,\;\;d(x,\partial\Omega)\geq l_\epsilon\;\Rightarrow\;\vert u(x)-\bar{u}(x_1)\vert\leq\epsilon.
\end{eqnarray*}
\end{lemma}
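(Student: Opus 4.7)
My plan is to deduce the desired pointwise bound at a given $x^0\in\Omega$ from an $L^2$ control of $u(\cdot,x^0_2)-\bar u$, obtained by combining Lemmas \ref{lemma1}, \ref{elle-zero1} and \ref{a-set}, which will then be converted to an $L^\infty$ bound via Lemma \ref{linfty}. Given $\epsilon>0$ I will fix $\bar q\in(0,q_0)$ small, then $r$ satisfying $\tfrac12 c^2\bar q^2 r>8J_0$ as in (\ref{venti1}), then $l$ at least as large as required by Lemmas \ref{elle-zero}, \ref{elle-zero1} and \ref{a-set} (with a logarithmic slack to absorb terms of size $re^{-\gamma l}$), and finally $l_\epsilon\ge 2l+\sqrt{l^2+r^2}$ so that the relevant cylinder fits inside $\Omega$ with the required clearance. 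Using antisymmetry I reduce to $x^0_1\ge 0$. The easy case is $x^0_1\ge l$: Remark \ref{remark} together with $d(x^0,\partial(\Omega^+))\ge\min(d(x^0,\partial\Omega),x^0_1)\ge l$ and the analogous decay $|\bar u(s)-1|\le Ke^{-ks}$ immediately yields $|u(x^0)-\bar u(x^0_1)|\le 2Ke^{-kl}<\epsilon$.

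In the main case $x^0_1<l$, the $x_1$-convexity of $\Omega$ gives $d((0,x^0_2),\partial\Omega)\ge d(x^0,\partial\Omega)-x^0_1\ge l_\epsilon-l$, which by the choice of $l_\epsilon$ ensures $C_l^r(x^0_2)\subset\Omega$ and $d(C_l^r(x^0_2),\partial\Omega)>l$. Lemma \ref{a-set} will then provide $a_\pm$ with $|a_\pm-x^0_2|\in(r/2,r)$ and $\|u(\cdot,a_\pm)-\bar u\|_{l+\delta/2}<\bar q$. Next I apply Lemma \ref{elle-zero1} to get $v$ equal to $\bar u(x_1)$ on the vertical sides $x_1=\pm(l+\delta/2)$, agreeing with $u$ outside thin vertical layers, and satisfying $J(v)-J(u)\le Cre^{-\gamma l}$ and $\|v(\cdot,a_\pm)-\bar u\|_{l+\delta/2}\le\bar q+Ce^{-\gamma l}$. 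On the strip $S:=(-l-\delta/2,l+\delta/2)\times(a_-,a_+)$ I construct a competitor $w^*$ that coincides with $v$ on $\partial S$ and with $\bar u(x_1)$ in the interior, interpolating linearly in $x_2$ in thin horizontal layers of width $\delta_t\sim\bar q$ near $x_2=a_\pm$; a direct computation, using the exponential decay of $\bar u'$ and $W(\bar u)$ at infinity, gives $J_S(w^*)\le(a_+-a_-)e_{l+\delta/2}(\bar u)+C(\bar q+e^{-\gamma l})$. Extending $w^*$ by $v$ outside $S$ yields an admissible competitor for $u$ in the sense of Definition \ref{local-min}, so $J(u)$ is no bigger than its energy; this together with $J(v)-J(u)\le Cre^{-\gamma l}$ yields $J_S(v)\le J_S(w^*)+Cre^{-\gamma l}$. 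Plugging into (\ref{last-energy0}) applied to $v$ and invoking Lemma \ref{lemma1} then produces
\[\int_{a_-}^{a_+}(q^v_{x_2})^2\,dx_2+c^2\int_{a_-}^{a_+}\min(q^v,q_0)^2\,dx_2\le C(\bar q+re^{-\gamma l}),\]
with $q^v(x_2)=\|v(\cdot,x_2)-\bar u\|_{l+\delta/2}$.

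Since $q^v(a_\pm)\le\bar q+Ce^{-\gamma l}\ll q_0$ and $q^v$ is continuous, a standard bootstrap from the integral estimate will rule out $q^v\ge q_0$ anywhere on $(a_-,a_+)$, so the truncation becomes irrelevant. The one-dimensional inequality $q^v(x^0_2)^2\le q^v(a_-)^2+2\|q^v\|_{L^2}\|q^v_{x_2}\|_{L^2}$ will deliver $q^v(x^0_2)\le C\sqrt{\bar q+re^{-\gamma l}}$, and using Lemma \ref{elle-zero1}(iii) $\|u(\cdot,x^0_2)-\bar u\|_l\le C\sqrt{\bar q}+Ce^{-\gamma l/2}$. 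Finally Lemma \ref{linfty} applied to $f(s)=u(s,x^0_2)-\bar u(s)$, whose exponential decay on $(-l,l)$ follows from Remark \ref{remark} and the decay of $\bar u$ at $+\infty$, gives $|u(x^0)-\bar u(x^0_1)|\le C\bar q^{1/3}+Ce^{-\gamma''l}$; choosing $\bar q$ so that $C\bar q^{1/3}<\epsilon/2$ and then $l$ so that $Ce^{-\gamma''l}<\epsilon/2$ concludes the proof. The hard part will be the energy comparison in the second paragraph: the construction of $w^*$ must be carried out so that its excess energy over the flat profile $(a_+-a_-)e_{l+\delta/2}(\bar u)$ is controlled by $\bar q$ (rather than by some power of $l$, which would spoil the passage to the limit $\bar q\to 0$), and the exclusion of $q^v\ge q_0$ requires care because the integral bound only controls the truncated quantity $\min(q^v,q_0)$.
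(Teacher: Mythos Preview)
Your argument is correct and reaches the same conclusion as the paper, but through a genuinely different route.

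The paper argues by contradiction. Assuming $|u(\tilde x)-\bar u(\tilde x_1)|>\epsilon$, it first deduces a \emph{lower} bound $q^v(x_2)\ge q^*\sim\epsilon^{3/2}$ on a short $x_2$-interval near $\tilde x_2$ (via Lipschitz continuity). It then builds a competitor $w$ by \emph{truncating} the amplitude, setting $w=\bar u+2\bar q\,\nu^v$ wherever $q^v>2\bar q$ (with $\bar q=q^*/N$). The energy comparison (\ref{last-energy0})--(\ref{last-energy}) together with the convexity estimate (\ref{stimac}) gives a definite drop $J_Q(v)-J_Q(w)\ge c'\epsilon^4$, which beats the error $Cre^{-\gamma l}$ from Lemma \ref{elle-zero1} and contradicts minimality. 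Lemma \ref{linfty} is used only to check that the truncated $w$ already satisfies $|w(\tilde x)-\bar u(\tilde x_1)|<\epsilon$.

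You instead argue directly. Your competitor $w^*$ is essentially $\bar u$ itself, glued to $v$ through thin $x_2$-transition layers of width $\delta_t\sim\bar q$; the key point (which you identify) is that the excess energy of $w^*$ over the flat profile is $O(\bar q)$, uniformly in $l$ --- this relies on the uniform bound $\int_{-l}^{l}|u_{x_1}-\bar u'|^2\,dx_1\le C$ coming from (\ref{exp-upgrade}). Minimality then yields the integral bound on $q^v$ and $q^v_{x_2}$, which you convert to a pointwise bound via the Sobolev-type inequality and Lemma \ref{linfty}. The step ruling out $q^v\ge q_0$ does work: if $x_2^*$ is the first point after $a_-$ where $q^v$ hits $q_0$, then on $[a_-,x_2^*]$ one has $\min(q^v,q_0)=q^v$, so both $\|q^v\|_{L^2}$ and $\|q^v_{x_2}\|_{L^2}$ on that subinterval are $O(\sqrt{\bar q})$, forcing $q_0^2\le 4\bar q^2+C\bar q$, a contradiction for $\bar q$ small. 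What each approach buys: the paper's truncation competitor avoids any transition-layer construction and uses (\ref{stimac}) in one stroke; your approach is constructive and yields an explicit quantitative bound $|u(x^0)-\bar u(x_1^0)|\le C\bar q^{1/3}$ without passing through a contradiction.
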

\begin{proof}
Set $d_\epsilon:=\frac{1}{k}\ln{\frac{2K}{\epsilon}}$  and assume that $d(x,\partial\Omega^+)\geq d_\epsilon$. Then (\ref{u-properties})$_2$ and (\ref{exp-ubar}) imply
\[\vert u(x)-\bar{u}(x_1)\vert\leq\vert u(x)-1\vert+\vert1-\bar{u}(x_1)\vert\leq\epsilon.\]
This and the oddness of $u$ imply that it suffices to consider the points $x\in\Omega^+$  which have  $d(x,\partial\Omega)\geq d_\epsilon$ and $x_1\in[0,d_\epsilon]$. Assume $\tilde{x}\in\Omega^+$ is a point with these properties that satisfies $\vert u(\tilde{x})-\bar{u}(\tilde{x}_1)\vert>\epsilon$. Then from (\ref{emme-bound}) and (\ref{exp-ubar}) it follows
\begin{equation}\label{extra}
\vert u(\tilde x)-\bar{u}(\tilde x_1)\vert -
\vert u(x)-\bar{u}(x_1)\vert
\le 2\mu (\vert x_1-\tilde x_1\vert +\vert x_2-\tilde x_2\vert ),
\end{equation}
where $\mu :=\max\{M^{\prime\prime},K\}$.
Then,
\begin{eqnarray}\label{l2-big}
\vert u(x)-\bar{u}(x_1)\vert >\frac {\epsilon } {2},\quad \text{ for }\ \vert x_1-
\tilde{x}_1\vert<\epsilon/{8 \mu},\ \vert x_2-\tilde{x}_2\vert<\epsilon/{8 \mu}.
\end{eqnarray}
From this inequality it follows
\begin{eqnarray}\label{l2-big1}
\|u(\cdot,x_2)-\bar{u}\|_{l+\delta /2}\geq \frac{1}{4\sqrt{\mu}}\epsilon^\frac{3}{2},\;\text{ for }\;\vert x_2-\tilde{x}_2\vert<\epsilon/{8\mu}
\end{eqnarray}
and thus, recalling Lemma \ref{elle-zero1} (iii)
\begin{eqnarray}\label{l2-big2}
\|v(\cdot,x_2)-\bar{u}\|_{l+\delta /2}\geq \frac{1}{8\sqrt{\mu}}\epsilon^\frac{3}{2},\;\text{ for }\;\vert x_2-\tilde{x}_2\vert<\epsilon/{8\mu}.
\end{eqnarray}
Set $q^*:=\frac{1}{4\sqrt{ \mu}}\epsilon^\frac{3}{2}$ and $\bar{q}=q^*/N$ where $N>0$ is a fixed number to be chosen later. In the remaining part of the proof we consider a certain number of lower bounds for $l$ and we always assume that (\ref{Rlarge}) is satisfied for $l> l_M$ where $l_M$ represents the maximum of the values  $l_r, l(\bar{q}),\dots$ introduced up the the point considered in the proof.

From Lemma \ref{a-set}, if $N$ is such that $\bar{q}<q_0,$  there is $r$ such that, for $l$ sufficiently large, there exist $a_-\in(\tilde{x}_2-r,\tilde{x}_2-r/2)$ and $a_+\in(\tilde{x}_2+r/2,\tilde{x}_2+r)$ with the property
\begin{eqnarray}\label{small-at-a}
\|u(\cdot,a_\pm)-\bar{u}\|_{l+\delta/2}<\bar{q}.
\end{eqnarray}
Moreover, from Lemma \ref{elle-zero1}, for $l$ sufficiently large, the map $v$ defined in the lemma satisfies $\|u(\cdot,a_\pm)-v(\cdot,a_\pm)\|_{l+\delta/2}<\bar{q}$ and therefore we have
\begin{eqnarray}\label{small-at-a1}
\|v(\cdot,a_\pm)-\bar{u}\|_{l+\delta/2}<2\bar{q}.
\end{eqnarray}
Let $Q:=(-l-\delta/2,l+\delta/2)\times(a_-,a_+)$ and let $w$ the map defined by
\begin{eqnarray}\label{w-def}
w=\left\{\begin{array}{l}
v,\;\text{ on }\;\Omega\setminus Q,\\
v,\;\text{ on }\;(-l-\delta/2,l+\delta/2)\times\{x_2\},\;x_2\in(a_-,a_+)\\
\hskip4cm\text{ if }\;q^v(x_2)\leq 2\bar{q},\\
\bar{u}+2\bar{q}\nu^v,\;\text{ on }\;(-l-\delta/2,l+\delta/2)\times\{x_2\},\;x_2\in(a_-,a_+)\\
\hskip4cm\text{ if }\;q^v(x_2)> 2\bar{q}.
\end{array}\right.
\end{eqnarray}
This definition implies in particular
\begin{eqnarray*}
\|w(\cdot,\tilde{x}_2)-\bar{u}\|_{l+\delta/2}\leq 2\bar{q}=\frac{2}{N}\frac{1}{4\sqrt{\mu}}\epsilon^\frac{3}{2}.
\end{eqnarray*}
Then Lemma \ref{linfty}, provided $N$ is chosen sufficiently large, implies
\begin{eqnarray}\label{small-at-x2}
\vert w(\tilde{x})-\bar{u}(\tilde{x}_1)\vert\leq C_2(\frac{2}{N}\frac{1}{4\sqrt
{\mu}})^\frac{2}{3}\epsilon<\epsilon.
\end{eqnarray}
On the other hand (\ref{w-def}),
 (\ref{last-energy0}) and (\ref{last-energy}) imply
\begin{eqnarray}\label{e-comparison}\\\nonumber
J_Q(v)-J_Q(w)
&=&\int_{\{q^v>2\bar{q}\}}[ \frac{1}{2}(\vert q_{x_2}^v\vert^2+((q^v)^2-4\bar{q}^2)\|\nu^v\|_{l+\delta/2}^2)
\\\nonumber &\ &\hspace{3cm}+E_{l+\delta/2}(q^v\nu^v)-
E_{l+\delta/2}(2\bar{q}\nu^v)] d x_2
\\\nonumber &\geq &\int_{\{q^v>2\bar{q}\}} [E_{l+\delta/2}(q^v\nu^v)-E_{l+\delta/2}(2\bar{q}\nu^v)] dx_2
\\\nonumber &\geq &\int_{\{q^v>q^*\}}[E_{l+\delta/2}(q^*\nu^v)-E_{l+\delta/2}(2\bar{q}\nu^v)] d x_2
.
\end{eqnarray}
 From (\ref{stimac}), for $q\leq q_0$,  we have $D_qE_l(q\nu)\geq c^2 q$ and therefore, recalling also that $\bar{q}=q^*/N$, we have
\begin{eqnarray}\label{e-e}
E_{l+\delta/2}(q^*\nu^v)-E_{l+\delta/2}(2\bar{q}\nu^v)
\geq\frac{1}{2}c^2(q^*)^2(1-\frac{4}{N^2})
\end{eqnarray}
which via (\ref{l2-big2}) yields
\[\int_{\{q^v>q^*\}} [E_{l+\delta/2}(q^*\nu^v)-E_{l+\delta/2}(2\bar{q}\nu^v)] dx_2
\geq\frac{1}{2}c^2(q^*)^2(1-\frac{4}{N^2})\frac{\epsilon}{4\mu} .\]
Then,  from (\ref{e-comparison}) and $q^*=\frac{1}{4\sqrt{\mu}}\epsilon^\frac{3}{2}$ we obtain
\begin{eqnarray*}
J_Q(v)-J_Q(w)\geq\frac{c^2}{128\mu^2}(1-\frac{4}{N^2})\epsilon^4.
\end{eqnarray*}
From this and Lemma \ref{elle-zero1} (iv) it follows
\begin{eqnarray}\label{e-e1}
J_Q(u)-J_Q(w)=J_Q(u)-J_Q(v)+J_Q(v)-J_Q(w)>0,
\end{eqnarray}
provided $l$ satisfies, beside  previous lower bounds, $l>l^*$ where $l^*$ is defined by the condition $Cre^{-\gamma l^*}=\frac{c^2}{128\mu^2}(1-\frac{4}{N^2})\epsilon^4$.
From the above part of the proof it follows that, if we set $l_\epsilon=2 l_M$ and if $\tilde{x}$ is such that $d(\tilde{x},\partial\Omega)\geq l_\epsilon$, then we can construct as before the set $Q$ and the map $w$ that coincides with $u$ outside $Q$ and satisfies (\ref{small-at-x2}) and (\ref{e-e1}) which contradicts the minimality of $u$. The proof is complete.
\end{proof}
Theorem \ref{teo} follows from Lemma \ref{de-exists}.

\section{The proof of Theorem \ref{teo2}}
\section{Basic lemmas}\label{basic-lemmas}
\begin{lemma}\label{v-property}

There exist positive constants $c,\;q^*$ such that
\begin{equation}\label{prima}
W^{\prime\prime}(q)\,\geq c^2,\text{ for } q\in (-q^*,q^*);
\end{equation}
\begin{equation}\label{seconda}
\begin{array}{l}
W(q)\,\geq\tilde{W}(q_0,q):= W(q_0)+W^\prime(q_0)(q-q_0) ,\\\medskip
\hspace{3 cm}\text{ for } (q_0,q)\in (0,q^*)\times(q_0,q^*]\cup(-q^*,0)\times[-q^*,q_0);
\end{array}
\end{equation}
\begin{equation}\label{terza}
\text{\rm sign}(q)W^\prime (q)\ge\text{\rm sign}(q)c^2q\geq 0 ,\quad\text{ for } q\in (-q^*, q^*);
\end{equation}
\end{lemma}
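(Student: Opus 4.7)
The plan is to establish all three statements as local consequences of the non-degeneracy $W''(0) > 0$ combined with the minimum condition $W(0) = 0$. Since $W \in C^2$, I would start by picking $c^2 := \tfrac{1}{2} W''(0) > 0$ and then using continuity of $W''$ to select $q^* > 0$ such that $W''(q) \geq c^2$ for all $q \in (-q^*, q^*)$. This immediately yields (\ref{prima}). Note also that $W(0) = 0$ is a strict local minimum, so $W'(0) = 0$.

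Statement (\ref{terza}) would then follow by the fundamental theorem of calculus: for $q \in (-q^*, q^*)$,
\begin{equation*}
W'(q) = W'(0) + \int_0^q W''(s)\,ds = \int_0^q W''(s)\,ds.
\end{equation*}
For $q \in [0, q^*)$ this gives $W'(q) \geq c^2 q \geq 0$, and for $q \in (-q^*, 0]$ the same identity gives $W'(q) \leq c^2 q \leq 0$. In either case $\text{sign}(q) W'(q) \geq \text{sign}(q)\, c^2 q \geq 0$.

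For (\ref{seconda}), the key observation is that (\ref{prima}) makes $W$ strictly convex on $(-q^*, q^*)$, so the tangent line at any interior point lies below the graph. Quantitatively, for $q_0, q \in (-q^*, q^*)$ with $q > q_0$, a Taylor expansion with integral remainder yields
\begin{equation*}
W(q) - W(q_0) - W'(q_0)(q - q_0) = \int_{q_0}^{q} (q - s) W''(s)\,ds \geq \tfrac{c^2}{2} (q - q_0)^2 \geq 0,
\end{equation*}
which covers the case $q_0 \in (0, q^*)$, $q \in (q_0, q^*]$. An analogous computation (or the symmetric observation that $q \mapsto W(-q)$ behaves the same way on the interval, or simply applying the same integral identity with the roles interchanged so the integrand $(s-q)W''(s)$ is nonnegative) handles $q_0 \in (-q^*, 0)$, $q \in [-q^*, q_0)$.

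I do not anticipate any genuine obstacle here: the lemma is purely a local regularity/convexity statement about $W$ near its non-degenerate minimum at $0$, and none of the global hypotheses of Theorem \ref{teo2} (boundedness at infinity, positivity away from $0$) are needed. The only care required is in choosing $q^*$ small enough that $W''$ stays uniformly bounded below by $c^2$ on $(-q^*, q^*)$, after which (\ref{prima}), (\ref{seconda}), and (\ref{terza}) all follow from a single application of the fundamental theorem of calculus applied to $W''$.
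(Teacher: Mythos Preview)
Your proof is correct and follows essentially the same route as the paper: continuity of $W''$ and $W''(0)>0$ give (\ref{prima}), integrating $W''$ from $0$ gives (\ref{terza}), and convexity on $(-q^*,q^*)$ gives (\ref{seconda}). The paper's argument is nearly identical, only slightly more terse (it simply invokes convexity for (\ref{seconda}) without writing out the integral remainder).
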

\begin{proof}
The inequality (\ref{prima}) follows immediately
from hypothesis (iii).
Now, the convexity of $W$ in $(-q^*, q^*)$ implies (\ref{seconda}).

To prove (\ref{terza}) note that, for $q\in (0,q^*),$
$$W^\prime (q)=\int_0^{q}W^{\prime\prime}(t) dt\ge c^2 q.$$
Analogously, for  $q\in (-q^*, 0),$
$$W^\prime (q)=-\int_{q}^0W^{\prime\prime}(t) dt\le c^2 q.$$
\end{proof}
By reducing the value of $q^*$ if necessary, we can also assume
\begin{equation}\label{vbar}
W(q^*\cdot \text{\rm sign}q)\le W(q)\le\overline{W},
\quad\text{ for } \vert q\vert\in[q^*,M_0],
\end{equation}
where $\overline{W}>0$ is a suitable constant. This follows from assumption (iii) and (\ref{bounds-udu}).

All the arguments that follow have a local character. Therefore, without loss of generality, in the remaining part of the proof we can assume that $\Omega$ is bounded.
\begin{lemma}\label{basic}
Assume  $R>0$  and $B_{x_0,R}\subset\Omega$ and
let $\varphi:B_{x_0,R}\rightarrow\R$ be the solution of
\begin{equation}\label{phi-comparison}
\left\{
\begin{array}{l}
\Delta \varphi = c^2\varphi, \text{ in } B_{x_0,R},\medskip\\
\varphi = \bar{q}, \text{ on } \partial B_{x_0,R},
\end{array}\right.
\end{equation}
where $\bar{q}\in(0,q^*]$.
Assume that $u\in W^{1,2}(\Omega)$ is a continuous map such that
\begin{eqnarray}
\vert u\vert\leq \bar{q}, \text{ for } x\in\overline{B}_{x_0,R}.
\end{eqnarray}
Then there exists a map $v\in W^{1,2}(\Omega)$ that satisfies:
\begin{eqnarray}\label{v-less-phi}
&&v=u, \text{ for } x\in\Omega\setminus B_{x_0,R},\\\nonumber
&&\vert v\vert\leq\varphi, \text{ for } x\in\overline{B}_{x_0,R}
\end{eqnarray}
and
\begin{eqnarray}\label{j-estimate}
\hskip.5cm J_\Omega(u)- J_\Omega(v)&=& J_{B_{x_0,R}}(u)-J_{B_{x_0,R}}(v)\\\nonumber  &\geq&\int_{B_{x_0,R}\cap\{\vert u\vert> \varphi\}}(W(u)-W(\varphi^u)-W^\prime(\varphi^u)(u-\varphi^u))dx,
\end{eqnarray}
where $\varphi^u=\text{\rm sign}(u)\varphi$.
\end{lemma}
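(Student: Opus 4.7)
The natural candidate is the \emph{truncation} of $u$ by the envelope $\pm\varphi$: set
\[
v(x):=\begin{cases} u(x), & x\in\Omega\setminus B_{x_0,R},\\ u(x), & x\in \overline B_{x_0,R},\ |u(x)|\le \varphi(x),\\ \text{sign}(u(x))\,\varphi(x), & x\in B_{x_0,R},\ |u(x)|>\varphi(x).\end{cases}
\]
By the strong maximum principle applied to $\Delta\varphi=c^2\varphi$ with $\varphi|_{\partial B_{x_0,R}}=\bar q>0$, we have $0<\varphi\le \bar q$ in $\overline B_{x_0,R}$; hence $\{|u|>\varphi\}$ is open with $\varphi>0$ on it, so $u$ has locally constant sign there and the expression $\text{sign}(u)\varphi=\varphi^u$ is unambiguous and Lipschitz. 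On $\partial B_{x_0,R}$ one has $|u|\le\bar q=\varphi$, so $v$ matches $u$ across $\partial B_{x_0,R}$ and $v\in W^{1,2}(\Omega)$; properties \eqref{v-less-phi} are then immediate.

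Since $v\equiv u$ on $\Omega\setminus(B_{x_0,R}\cap\{|u|>\varphi\})$, the energy difference reduces to
\[
J_\Omega(u)-J_\Omega(v)=\int_{B_{x_0,R}\cap\{|u|>\varphi\}}\!\!\Big[\tfrac12\bigl(|\nabla u|^2-|\nabla\varphi|^2\bigr)+W(u)-W(\varphi^u)\Big]dx,
\]
so it suffices to establish, after cancelling the common factor $W(u)-W(\varphi^u)$, the inequality
\[
\int_{\{|u|>\varphi\}}\tfrac12\bigl(|\nabla u|^2-|\nabla\varphi|^2\bigr)\,dx \;\ge\; -\int_{\{|u|>\varphi\}} W'(\varphi^u)(u-\varphi^u)\,dx.
\]
I split $\{|u|>\varphi\}$ into the open sets $A:=\{u>\varphi\}$ and $B:=\{u<-\varphi\}$, on each of which $\varphi^u$ has constant sign and $u-\varphi^u$ vanishes on the portion of the boundary lying inside $B_{x_0,R}$, and is zero (or not attained) on $\partial B_{x_0,R}$ because $|u|\le\bar q=\varphi$ there.

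On $A$, the algebraic identity $|\nabla u|^2-|\nabla\varphi|^2=|\nabla(u-\varphi)|^2+2\nabla\varphi\cdot\nabla(u-\varphi)$ combined with integration by parts (the boundary terms vanishing as above) and the equation $\Delta\varphi=c^2\varphi$ gives
\[
\int_A\tfrac12(|\nabla u|^2-|\nabla\varphi|^2)\,dx=\int_A\tfrac12|\nabla(u-\varphi)|^2\,dx-\int_A c^2\varphi(u-\varphi)\,dx,
\]
so the desired inequality on $A$ becomes
\[
\int_A\tfrac12|\nabla(u-\varphi)|^2\,dx+\int_A\bigl(W'(\varphi)-c^2\varphi\bigr)(u-\varphi)\,dx\ge 0,
\]
which is true because on $A$ we have $u-\varphi>0$ and, by \eqref{terza} applied to $\varphi\in(0,q^*]$, $W'(\varphi)\ge c^2\varphi$. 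The computation on $B$ is symmetric: one expands $|\nabla u|^2-|\nabla\varphi|^2=|\nabla(u+\varphi)|^2-2\nabla\varphi\cdot\nabla(u+\varphi)$, integrates by parts to pick up $+\int_B c^2\varphi(u+\varphi)$, and uses $W'(-\varphi)\le -c^2\varphi$ from \eqref{terza}; since $u+\varphi<0$ on $B$, the sign works out and the analogue of the above inequality holds.

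The only genuinely delicate points are verifying that the integration-by-parts boundary terms vanish (the trace of $u-\varphi^u$ is zero on $\partial A\cap B_{x_0,R}$ and $\partial B\cap B_{x_0,R}$ by definition, and $u=\varphi^u$ on $\partial B_{x_0,R}$ wherever $A$ or $B$ reach it) and checking that the sign-based truncation is unambiguous, both of which hinge on $\varphi>0$. With these in place, assembling the estimates on $A$ and $B$ yields \eqref{j-estimate}.
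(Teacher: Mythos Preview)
Your argument is correct, and it is genuinely different from the paper's. You take the direct truncation $v=\max\{-\varphi,\min\{u,\varphi\}\}$ and compare energies using the equation for $\varphi$ itself: testing the weak form of $\Delta\varphi=c^2\varphi$ against $(u-\varphi)^+\in W_0^{1,2}(B_{x_0,R})$ gives the cross term $\int_A\nabla\varphi\cdot\nabla(u-\varphi)=-\int_A c^2\varphi(u-\varphi)$, and then the pointwise inequality $W'(\varphi)\ge c^2\varphi$ from \eqref{terza} finishes the job. The paper instead introduces an auxiliary Dirichlet minimizer $\rho^+$ of $J$ on $A_b=\{u>b\}$, shows by comparison that $\rho^+\le\varphi$, sets $v=\min\{u,\rho^+\}$ there, and uses the Euler--Lagrange equation of $\rho^+$ (not of $\varphi$) to produce the term $W'(\rho^+)(u-\rho^+)$; the replacement of $\rho^+$ by $\varphi$ in the final bound then comes from the monotonicity of the tangent-line functional $\tilde W(\cdot,u)$ in \eqref{seconda}.

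Your route is shorter and avoids the auxiliary variational problem entirely; the paper's construction yields a competitor with (possibly) even lower energy, but that extra strength is not needed for the stated inequality. One small remark: rather than speaking of ``integration by parts with vanishing boundary terms'' on the open set $A=\{u>\varphi\}$, which may have rough boundary, it is cleaner to phrase the step exactly as above, testing the weak equation for $\varphi$ on $B_{x_0,R}$ against $(u-\varphi)^+$; this makes the argument fully rigorous without any regularity assumption on $\partial A$.
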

\begin{proof}
Let $b>0$ be a number such that $b\leq\min_{x\in B_{x_0,R}}\varphi$. Since $u$ is continuous the set $A_b:=\{x\in B_{x_0,R}:u>b\}$ is open and there exists a function $\rho^+\in W^{1,2}(A_b)$ that minimizes the functional $J_{A_b}(p)=\int_{A_b}(\frac{1}{2}\vert\nabla p\vert^2+W(p))dx$ in the class of functions that satisfy the Dirichlet condition $p=u$ on $\partial A_b$. Since $\frac{\vert\rho^+\vert+\rho^+}{2}$ is also a minimizer we have $\rho^+\geq 0$. We also have $\rho^+\leq \bar{q}$. This follows from (\ref{terza}) and (\ref{vbar}) which imply that $\min\{\rho^+,\bar{q}\}$ is also a minimizer. The map $\rho^+$ satisfies the variational equation
\begin{eqnarray}\label{rho-variation0}
\int_{A_b}(\langle\nabla\rho^+,\nabla\eta\rangle+W^\prime(\rho^+)\eta)dx=0,
\end{eqnarray}
for all $\eta\in W_0^{1,2}(A_b)\cap L^\infty(A_b)$. In particular, if we define $A_b^*:=\{x\in A_b: \rho^+>\varphi\}$, we have
\begin{eqnarray}\label{rho-variation}
\int_{A_b^*}(\langle\nabla\rho^+,\nabla\eta\rangle+W^\prime(\rho^+)\eta)dx=0,
\end{eqnarray}
for all $\eta\in W_0^{1,2}(A_b)\cap L^\infty(A_b)$ that vanish on $A_b\setminus A_b^*$.

If we take $\eta=(\rho^+-\varphi)^+$ in (\ref{rho-variation}) and
use (\ref{terza}) we get
\begin{eqnarray}\label{rho-variation1}
\int_{A_b^*}(\langle\nabla\rho^+,\nabla(\rho^+-\varphi)\rangle+c^2\rho^+(\rho^+-\varphi))dx\leq 0,
\end{eqnarray}
This inequality and
\begin{eqnarray}\label{phi-variation1}
\int_{A_b^*}(\langle\nabla\varphi,\nabla(\rho^+-\varphi)\rangle+c^2\varphi(\rho^+-\varphi))dx=0,
\end{eqnarray}
that follows from (\ref{phi-comparison}), imply
\begin{eqnarray}\label{rho-variation2}
\int_{A_b^*}(\vert\nabla(\rho^+-\varphi)\vert^2+c^2(\rho^+-\varphi)^2)dx\leq 0.
\end{eqnarray}
That is $\mathcal{H}^n(A_b^*)=0$
which, together with $\rho^+\leq\varphi$ on $A_b\setminus A_b^*,$ shows that
\begin{eqnarray}\label{rho-min-phi}
\rho^+\leq\varphi, \text{ for } x\in A_b.
\end{eqnarray}
If we set $A_b^-:=\{x\in B_{x_0,R}:u<-b\}$ and $\rho^-\in W^{1,2}(A_b^-)$ is a minimizer of $J_{A_b^-}$ in the set of $W^{1,2}(A_b^-)$ maps that have the same trace of $u$ on $\partial A_b^-$, the argument above can be applied to $\rho^-$ to obtain
\begin{eqnarray}\label{rho-max-phi}
\rho^-\geq -\varphi, \text{ for } x\in  A_b^-.
\end{eqnarray}
Let $v\in W^{1,2}(\Omega)$ be the map defined by setting
\begin{eqnarray}\label{v-definition}
v=\left\{\begin{array}{l}
u,\text{ for } x\in\Omega\setminus A_b\cup A_b^-,\\
\min\{u,\rho^+\}, \text{ for } x\in A_b,\\
\max\{u,\rho^-\}, \text{ for } x\in A_b^-,
\end{array}\right.
\end{eqnarray}
This definition implies (\ref{v-less-phi}). Moreover we have
\begin{eqnarray}\label{diff-energy}
J_\Omega(u)- J_\Omega(v)&=& J_{A_b\cup A_b^-}(u)- J_{A_b\cup A_b^-}(v)\\\nonumber
 &=& J_{A_b\cap\{\rho^+<u\}}(u)- J_{A_b\cap\{\rho^+<u\}}(\rho^+)\\\nonumber
 &&+J_{A_b^-\cap\{\rho^->u\}}(u)- J_{A_b^-\cap\{\rho^->u\}}(\rho^-).
 \end{eqnarray}
 From (\ref{rho-variation0}) with $\eta=(u-\rho^+)^+$ it follows
 \begin{eqnarray}
\int_{A_b\cap\{\rho^+<u\}}\langle\nabla\rho^+,\nabla(u-\rho^+)\rangle &= &-\int_{A_b\cap\{\rho^+<u\}}W^\prime(\rho^+)(u-\rho^+)dx .
\end{eqnarray}
This and the identity
\begin{eqnarray}
\frac{1}{2}(\vert\nabla u\vert^2-\vert\nabla \rho^+\vert^2)&=&\frac{1}{2}\vert\nabla u-\nabla \rho^+\vert^2 +\langle\nabla\rho^+,\nabla( u-\rho^+)\rangle,
\end{eqnarray}
imply

 \begin{eqnarray}\label{j-difference+} J_{A_b\cap\{\rho^+<u\}}(u)- J_{A_b\cap\{\rho^+<u\}}(\rho^+)\hskip6.5cm\\\nonumber\\\nonumber =
\int_{A_b\cap\{\rho^+<u\}}(\frac{1}{2}\vert\nabla u-\nabla\rho^+\vert^2+\langle\nabla\rho^+,\nabla( u-\rho^+)\rangle +W(u)-W(\rho^+))dx\\\nonumber  \\\nonumber \geq\int_{A_b\cap\{\rho^+<u\}} (W(u)-W(\rho^+)-W^\prime(\rho^+)(u-\rho^+))dx\\\nonumber  \\\nonumber \geq\int_{A_b\cap\{\varphi<u\}} (W(u)-W(\varphi)-
W^\prime(\varphi)(u-\varphi))dx,
\end{eqnarray}
where we have used $A_b\cap\{\varphi<u\}\subset A_b\cap\{\rho^+<u\}$ and
the fact that the function $\tilde W(\cdot, u)$ defined in (\ref{seconda}) is increasing on
 $(0, u).$  In the same way one proves
 \begin{eqnarray}\label{j-difference-} J_{A_b^-\cap\{\rho^->u\}}(u)- J_{A_b^-\cap\{\rho^->u\}}(\rho^-)\hskip6.5cm\\\nonumber\\\nonumber   \geq\int_{A_b^-\cap\{-\varphi>u\}} (W(u)-W(-\varphi)-W^\prime(-\varphi)(u+\varphi))dx.
\end{eqnarray}
This inequality and (\ref{j-difference+}) imply (\ref{j-estimate}).
\end{proof}
Given $\bar{q}\in(0,q^*)$ define
\begin{equation}\label{rbar}
\overline{R}=\frac{q^*-\bar{q}}{M_0}
\end{equation}
 where $M_0$ is the constant in (\ref{bounds-udu}).
For later reference we quote
\begin{corollary}\label{energy-corollary}
Let $\lambda>0$ be fixed, assume that $R>\overline{R}$ is such that $B_{x_0,R+\lambda/2}\subset \Omega$ and let $u\in W^{1,2}(\Omega)$ a continuous map that satisfies the condition
\begin{eqnarray}
\vert u\vert\leq\bar{q}, \text{ for } x\in \partial B_{x_0,R+\lambda/2}.
\end{eqnarray}
 Then, there exist a constant $k>0$ independent of  $R>\overline{R}$ and a map $v\in W^{1,2}(\Omega)$ such that
 \begin{eqnarray}\label{diff-energy-corollary}
&& v=u,\;\text{ on }\;\Omega\setminus B_{x_0,R+\lambda/2},\\\nonumber\\\nonumber
&& J_{\Omega} (u)-J_{\Omega} (v)=J_{B_{x_0,R+\lambda/2}}(u)- J_{B_{x_0,R+\lambda/2}}(v)
\geq k\mathcal{H}^n(A_{\bar{q}}\cap B_{x_0,R}),
\end{eqnarray}
where $A_{\bar{q}}:=\{x\in\Omega:\vert u\vert>\bar{q}\}$.
\end{corollary}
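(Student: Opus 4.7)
The plan is to first replace $u$ on $\overline{B}_{x_0,R+\lambda/2}$ by its pointwise truncation at $\pm\bar{q}$, then apply Lemma \ref{basic} on the ball $B_{x_0,R+\lambda/2}$, and finally show that the barrier $\varphi$ produced by that lemma is uniformly (in $R$) bounded away from $\bar{q}$ on the inner ball $B_{x_0,R}$. This last ingredient gives a positive pointwise lower bound on the integrand in (\ref{j-estimate}), from which the measure-based estimate (\ref{diff-energy-corollary}) follows.

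Setting $R':=R+\lambda/2$, I introduce the truncation $\tilde{u}:=\mathrm{sign}(u)\min\{|u|,\bar{q}\}$ on $\overline{B}_{x_0,R'}$, extended by $\tilde u=u$ outside. The boundary condition $|u|\leq\bar{q}$ on $\partial B_{x_0,R'}$ makes $\tilde{u}$ continuous and of class $W^{1,2}(\Omega)$. On $\{|u|>\bar{q}\}$ one has $\nabla\tilde{u}=0$, and using (\ref{vbar}) together with the strict monotonicity of $W$ on $[0,q^*]$ that follows from (\ref{terza}), also $W(\tilde{u})=W(\mathrm{sign}(u)\bar{q})\leq W(u)$; hence $J_\Omega(u)\geq J_\Omega(\tilde{u})$. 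Since $|\tilde{u}|\leq\bar{q}\leq q^*$ throughout $\overline{B}_{x_0,R'}$, Lemma \ref{basic} applies to $\tilde u$ on the ball $B_{x_0,R'}$ and yields $v\in W^{1,2}(\Omega)$ agreeing with $u$ outside $B_{x_0,R'}$, with $|v|\leq\varphi$ inside, and satisfying
\[
J_{B_{x_0,R'}}(\tilde{u})-J_{B_{x_0,R'}}(v)\geq\int_{B_{x_0,R'}\cap\{|\tilde{u}|>\varphi\}}\bigl[W(\tilde{u})-W(\varphi^{\tilde{u}})-W'(\varphi^{\tilde{u}})(\tilde{u}-\varphi^{\tilde{u}})\bigr]\,dx,
\]
with $\varphi$ the solution of (\ref{phi-comparison}) on $B_{x_0,R'}$ with boundary data $\bar{q}$.

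The crucial step is an $R$-independent upper bound for $\varphi$ on $\overline{B}_{x_0,R}$. For any $x\in\overline{B}_{x_0,R}$ the ball $B_{x,\lambda/2}$ is contained in $B_{x_0,R'}$; let $\psi_x$ solve $\Delta\psi_x=c^2\psi_x$ on $B_{x,\lambda/2}$ with $\psi_x=\bar{q}$ on its boundary. Since $\varphi\leq\bar{q}=\psi_x$ on $\partial B_{x,\lambda/2}$ and both functions satisfy the same linear equation, the maximum principle gives $\varphi\leq\psi_x$ on $B_{x,\lambda/2}$. By translation invariance $\psi_x(x)=c_\lambda\bar{q}$, where $c_\lambda\in(0,1)$ depends only on $c,\lambda,n$ (strictness by the strong maximum principle, since no constant function solves $\Delta\psi_x=c^2\psi_x$ with $\bar q>0$). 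Hence $\varphi\leq c_\lambda\bar{q}$ on $\overline{B}_{x_0,R}$, uniformly in $R>\overline{R}$.

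It remains to turn this into a lower bound on the integrand. On $A_{\bar{q}}\cap B_{x_0,R}$ we have $|\tilde{u}|=\bar{q}>c_\lambda\bar{q}\geq\varphi$, so this set lies in $\{|\tilde{u}|>\varphi\}$ and $\tilde{u}-\varphi^{\tilde{u}}=\mathrm{sign}(u)(\bar{q}-\varphi)$. Since both $\tilde{u}$ and $\varphi^{\tilde{u}}$ lie in $[-q^*,q^*]$, Taylor's formula combined with (\ref{prima}) yields
\[W(\tilde{u})-W(\varphi^{\tilde{u}})-W'(\varphi^{\tilde{u}})(\tilde{u}-\varphi^{\tilde{u}})\geq\tfrac{c^2}{2}(\bar{q}-\varphi)^2\geq\tfrac{c^2}{2}\bar{q}^2(1-c_\lambda)^2=:k.\]
Combined with the truncation inequality $J_\Omega(u)\geq J_\Omega(\tilde{u})$, this delivers $J_\Omega(u)-J_\Omega(v)\geq k\,\mathcal{H}^n(A_{\bar{q}}\cap B_{x_0,R})$, which is (\ref{diff-energy-corollary}). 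The main obstacle is precisely the $R$-independent estimate for $\varphi$: the barrier is a priori defined on the $R$-varying ball $B_{x_0,R'}$, and the fixed-radius interior comparison ball $B_{x,\lambda/2}$ is the device that isolates the dependence on $\lambda$ alone.
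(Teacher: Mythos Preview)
Your proof is correct and follows the same overall architecture as the paper's: truncate $u$ at level $\pm\bar q$, apply Lemma~\ref{basic} to the truncation on $B_{x_0,R+\lambda/2}$, and bound the resulting integrand from below via the convexity inequality~(\ref{prima}).

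The one genuine difference lies in how you obtain the $R$-independent gap between $\varphi$ and $\bar q$ on the inner ball $B_{x_0,R}$. The paper writes $\varphi(x)=\phi(|x-x_0|,R+\lambda/2)$ and invokes the monotonicity property~(\ref{phi-l}), namely $R_1<R_2\Rightarrow\phi(R_1-\lambda,R_1)>\phi(R_2-\lambda,R_2)$, to reduce to the fixed case $R=\overline R$ and set $k=\tfrac{c^2}{2}\bigl(\phi(\overline R+\lambda/2,\overline R+\lambda/2)-\phi(\overline R,\overline R+\lambda/2)\bigr)^2$. Your argument instead compares $\varphi$ on each interior ball $B_{x,\lambda/2}\subset B_{x_0,R+\lambda/2}$ with the solution $\psi_x$ on that fixed-radius ball, obtaining $\varphi(x)\le c_\lambda\bar q$ directly from the maximum principle. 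This is a cleaner and more self-contained route: it avoids the monotonicity claim~(\ref{phi-l}) (which the paper states but does not prove) and isolates the $\lambda$-dependence transparently. Both methods yield the same qualitative constant, and the rest of the argument is identical.
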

\begin{proof}

Let $\hat{u}\in W^{1,2}(\Omega)$ be defined by
\begin{eqnarray}
\hat{u}=\left\{\begin{array}{l}
\bar{q},\;\;\ \text{ on }  B_{x_0,R+\lambda/2}\cap \{u>\bar{q}\},\\
-\bar{q},\;\;\text{ on }  B_{x_0,R+\lambda/2}\cap \{u<-\bar{q}\},
\\
u,\;\;\text{ otherwise }.
\end{array}\right.
\end{eqnarray}
Then, using also (\ref{vbar}), we have
 \begin{eqnarray}\label{diff-energy5}
&&J_{\Omega}(u)- J_{\Omega}(\hat{u})
 = \int_{ B_{x_0,R+\lambda/2}\cap \{u>\bar{q}\}}(\frac{1}{2}\vert\nabla u\vert^2+W(u)-W(\bar{q}))dx \\\nonumber &&\hskip2.5cm
+\int_{B_{x_0,R+\lambda/2}\cap \{u<-\bar{q}\}}(\frac{1}{2}\vert\nabla u\vert^2+W(u)-W(-\bar{q}))dx\geq 0.
\end{eqnarray}
The map $\hat{u}$ satisfies the assumptions of Lemma \ref{basic}. Therefore if we let $v$ be the map associated to $\hat{u}$ by Lemma \ref{basic} (for $R+\lambda/2$), from (\ref{diff-energy5}) and (\ref{j-estimate}) we obtain
\begin{equation}\label{j-estimate1}
\begin{split}
 J_{\Omega}(u)- J_{\Omega}(v)&= J_{B_{x_0,R+\lambda/2}}(u)-J_{B_{x_0,R+\lambda/2}}(v)\\&\ge
 J_{B_{x_0,R+\lambda/2}}(\hat u)-J_{B_{x_0,R+\lambda/2}}(v)
\\ &\geq\int_{A_{\bar{q}}\cap B_{x_0,R+\lambda/2}} (W(\hat u)-W(\varphi^{\hat u})-W^\prime(\varphi^{\hat u})(\hat u-\varphi^{\hat u}))dx,
\end{split}
\end{equation}
where we have also used $A_{\bar{q}}\cap B_{x_0,R+\lambda/2}\subset B_{x_0,R+\lambda/2}\cap\{\vert \hat{u}\vert>\varphi\}$.

We have $\varphi(x)=\phi(\vert x-x_0\vert,R+\lambda/2)$ with $\phi(\cdot,R+\lambda/2):[0,R+\lambda/2]\rightarrow\R$ a positive function which is strictly increasing in $(0,R+\lambda/2]$. Moreover we have $\phi(R+\lambda/2,R+\lambda/2)=\bar{q}$ and
\begin{eqnarray}\label{phi-l}
R_1<R_2\;\;\Rightarrow\;\;\phi(R_1-\lambda,R_1)>\phi(R_2-\lambda,R_2).
\end{eqnarray}
Note that $x\in B_{x_0,R}$ implies $\varphi(x)\leq \phi(R,R+\lambda/2)$.
Therefore for $x$ in the subset of $A_{\bar{q}}\cap B_{x_0,R}$ where $u>\varphi$ we have
 \begin{eqnarray}\label{diff-potential}
 W(\bar{q})-W(\varphi)-W^\prime(\varphi)(\bar{q}-\varphi)
 =\int_{\varphi}^{\bar{q}}(W^\prime(q)-W^\prime(\varphi))dq\hspace{1 cm}\\\nonumber
 \geq c^2\int_{\varphi}^{\bar{q}}(q-\varphi)dq=\frac{1}{2}c^2(\bar{q}-\varphi)^2\geq
 \frac{1}{2}c^2(\phi(R+\lambda/2,R+\lambda/2)-\phi(R,R+\lambda/2))^2,
\end{eqnarray}
where we have also used $(\ref{prima})$. In a similar way we derive the estimate
 \begin{eqnarray}\label{diff-potential-}
 W(-\bar{q})-W(-\varphi)-W^\prime(-\varphi)(-\bar{q}+\varphi)
 =\int_{-\varphi}^{-\bar{q}}(W^\prime(q)-W^\prime(-\varphi))dq\\\nonumber
 \geq -c^2\int_{-\bar{q}}^{-\varphi}(q+\varphi)dq=\frac{1}{2}c^2(\bar{q}-\varphi)^2 \geq
 \frac{1}{2}c^2(\phi(R+\lambda/2,R+\lambda/2)-\phi(R,R+\lambda/2))^2,
\end{eqnarray}
valid in the subset of $A_{\bar{q}}\cap B_{x_0,R}$ where $u<-\varphi$.
The corollary follows from this and (\ref{diff-potential}), from (\ref{j-estimate1}) and from the fact that, by (\ref{phi-l}), the last expression in (\ref{diff-potential}) and (\ref{diff-potential-}) is increasing with $R$. Therefore we can assume
\begin{eqnarray}
k=\frac{1}{2}c^2(\phi(\overline{R}+\lambda/2,\overline{R}+\lambda/2)-\phi(\overline{R},\overline{R}+\lambda/2))^2.
\end{eqnarray}
\end{proof}

\begin{lemma}\label{cutting-lemma}
Let  $u\in W^{1,2}(\Omega)$ be a local minimizer as in Theorem \ref{teo2}.
Let $\lambda>0$ be fixed and assume that $B_{x_0,R+\lambda}\subset\Omega$ for some $R>\overline{R}$. Assume
\begin{equation}\label{serve}
A_{\bar{q}}\cap B_{x_0,R}\neq\varnothing\;,
\end{equation}
and let $S=A_{\bar{q}}\cap (B_{x_0,R+\lambda}\setminus{\overline{B_{x_0,R}}})$. Then, there exist a constant $K>0$ independent of  $R>\overline{R}$ and a continuous map $v\in W^{1,2}(\Omega) $ that satisfies
\begin{equation}\label{v-def-1}
\left\{\begin{array}{l}
v = u, \text{ for } x\in\Omega\setminus S,\\
\text{\rm sign}(u)v > \bar{q}, \text{ for } x\in A_{\bar{q}}\cap B_{x_0,R+\frac{\lambda}{2}},\\
\text{\rm sign}(u)v = \bar{q}, \text{ for } x\in \partial(A_{\bar{q}}\cap B_{x_0,R+\frac{\lambda}{2}}),
\end{array}\right.
\end{equation}
and
\begin{equation}\label{secondaLemma2.4}
J_{\Omega}(v)-J_{\Omega}(u)=J_S(v)-J_S(u)\leq K\mathcal{H}^n(S).
\end{equation}
\end{lemma}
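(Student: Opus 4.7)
The plan is to modify $u$ only inside the shell $S$ by a radial linear interpolation that pulls the value toward $\bar q\,\text{sign}(u)$ on the middle sphere $\partial B_{x_0,R+\lambda/2}$ while keeping $v=u$ on $\partial B_{x_0,R}$ and $\partial B_{x_0,R+\lambda}$. Concretely, with $\rho(x):=|x-x_0|$, introduce the piecewise linear tent
\[
\mu(\rho) := \begin{cases} 2(\rho-R)/\lambda, & \rho\in[R,R+\lambda/2],\\ 2(R+\lambda-\rho)/\lambda, & \rho\in[R+\lambda/2,R+\lambda], \end{cases}
\]
so $\mu=0$ on the inner and outer boundary spheres of the shell and $\mu=1$ on the middle one. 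Then set $v=u$ on $\Omega\setminus S$ and
\[
v(x) := (1-\mu(\rho(x)))\,u(x) + \mu(\rho(x))\,\bar q\,\text{sign}(u(x)),\qquad x\in S.
\]
Since $|u|>\bar q>0$ on $S$, the factor $\text{sign}(u)$ is locally constant there, so the formula is as regular as $u$ and $\mu$ allow.

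To check the properties in (\ref{v-def-1}), I first handle continuity across $\partial S$: we have $\partial S\subset \partial A_{\bar q}\cup\partial B_{x_0,R}\cup\partial B_{x_0,R+\lambda}$. On the last two parts $\mu=0$, so the interpolation reduces to $v=u$; on $\partial A_{\bar q}$ we have $|u|=\bar q$, hence $u=\bar q\,\text{sign}(u)$, and the interpolation again produces $v=u$. Thus $v$ is continuous, and since $|\nabla v|$ will be shown to be bounded, $v\in W^{1,2}(\Omega)$. For the sign conditions: if $x\in A_{\bar q}\cap B_{x_0,R}$ then $v=u$ and $\text{sign}(u)v=|u|>\bar q$; if $x\in A_{\bar q}\cap(B_{x_0,R+\lambda/2}\setminus\overline{B_{x_0,R}})$, then $\rho<R+\lambda/2$, so $\mu<1$ and
\[
\text{sign}(u)\,v = (1-\mu)|u| + \mu\bar q > \bar q.
\]
Finally, on $\partial(A_{\bar q}\cap B_{x_0,R+\lambda/2})$ one is either on $\partial A_{\bar q}$ (where $|u|=\bar q$ forces $v=u$ and $\text{sign}(u)v=\bar q$) or on $\partial B_{x_0,R+\lambda/2}\cap\overline{A_{\bar q}}$ (where $\mu=1$ gives $v=\bar q\,\text{sign}(u)$), in either case yielding equality.

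For (\ref{secondaLemma2.4}), note that $|v|\le\max(|u|,\bar q)\le M_0$ pointwise, so $W(v)\le\overline W$ by (\ref{vbar}). Using that $\text{sign}(u)$ is locally constant on $S$,
\[
\nabla v = (1-\mu)\nabla u + (\bar q\,\text{sign}(u)-u)\nabla\mu\qquad\text{on }S,
\]
and since $||u|-\bar q|\le M_0$ and $|\nabla\mu|\le 2/\lambda$, we obtain $|\nabla v|\le M_0(1+2/\lambda)$ a.e. Therefore
\[
J_S(v)\le \Bigl(\tfrac12 M_0^{2}(1+2/\lambda)^{2} + \overline W\Bigr)\mathcal H^n(S),
\]
and $J_S(u)\ge 0$ combined with $v=u$ off $S$ yields (\ref{secondaLemma2.4}) with $K$ depending only on $M_0$, $\lambda$, and $W$, hence independent of $R>\overline R$.

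No step is genuinely hard; the only point that needs attention is recognizing that the interpolation formula automatically reduces to $u$ on the portion of $\partial A_{\bar q}$ interior to the shell, which is exactly what glues $v$ continuously across this free boundary and lets the whole construction be done pointwise without any explicit knowledge of the geometry of $A_{\bar q}$.
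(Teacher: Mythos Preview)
Your proof is correct and follows essentially the same construction as the paper: the tent function $\mu$ is exactly $1-\bigl|1-2(r-R)/\lambda\bigr|$, and your interpolation formula coincides with the paper's definition of $v$ on $S$. The only notable difference is that the paper opens with the observation (via Corollary~\ref{energy-corollary} and minimality of $u$) that $A_{\bar q}\cap\partial B_{x_0,R+\lambda/2}\neq\varnothing$, which you omit; this remark is not needed to verify the stated conclusions (\ref{v-def-1})--(\ref{secondaLemma2.4}), though it clarifies that the modification is nontrivial.
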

\begin{proof}
 From Corollary \ref{energy-corollary} and the minimality of $u$ we necessarily have $A_{\bar{q}}\cap \partial B_{x_0,R+\frac{\lambda}{2}}\neq\emptyset .$
Indeed, if on the contrary $A_{\bar{q}}\cap \partial B_{x_0,R+\frac{\lambda}{2}}=\emptyset,$ then $\vert u\vert\le \bar{q}$ on $\partial B_{x_0,R+\frac{\lambda}{2}}.$
Therefore, applying Corollary  \ref{energy-corollary} to $u$ on $B_{x_0,R+\frac{\lambda}{2}},$ we could find $v$ satisfying
$$
J_{\Omega} (u)-J_{\Omega} (v)=J_{B_{x_0,R+\frac{\lambda}2}}(u)- J_{B_{x_0,R+\frac{\lambda}2}}(v)
\geq k\mathcal{H}^n(A_{\bar{q}}\cap B_{x_0,R}).$$
From (\ref{serve}), this is
in contradiction with the minimality of $u.$

Let $v\in W^{1,2}(\Omega)$ be defined by $v=u$ for $x\not\in S$ and by
\begin{eqnarray}\label{v-def-2}
v=(1-\vert 1-2\frac{r-R}{\lambda}\vert)\text{\rm sign}(u)\bar{q}+\vert 1-2\frac{r-R}{\lambda}\vert u, \text{ for } x\in S,
\end{eqnarray}
where $r=\vert x-x_0\vert$.
From this definition and (\ref{bounds-udu}) it follows
\begin{eqnarray}\label{q-bound-ins}
\bar{q}<\text{\rm sign}(u)v\leq \vert u\vert\leq M_0, \text{ for }&& x\in S\setminus\partial B_{x_0,R+\frac{\lambda}{2}},\\\nonumber
v=\text{\rm sign}(u)\bar{q},\quad\quad \text{ for }&& x\in S\cap\partial B_{x_0,R+\frac{\lambda}{2}}.
\end{eqnarray}
Moreover, it is easy to verify that $v=u$ on $\partial S.$ Then, $v$ is continuous and satisfies $(\ref{v-def-1}).$
 From (\ref{v-def-2}) we also obtain
\begin{eqnarray}\label{nabla-qv}
\nabla v= \Big \vert 1-2\frac{r-R}{\lambda}\Big\vert
\nabla u+\frac{2}{\lambda}(u-\text{\rm sign}(u)\bar{q})\nu, \text{ for } x\in S,
\end{eqnarray}
where $\nu=-{\rm sign}(1-2\frac{r-R}{\lambda})\frac{x-x_0}{r}$.
From (\ref{nabla-qv}), (\ref{q-bound-ins}) and (\ref{bounds-udu}) it follows
\begin{eqnarray}\label{nabla-qv-1}
\frac{1}{2}(\vert\nabla v\vert^2-\vert\nabla u\vert^2)+W(v)-W(u)\hskip2.5cm\\\nonumber
\leq \frac{1}{2}(\vert \nabla u\vert+\frac{2}{\lambda}\vert u-\text{\rm sign}(u)\bar{q}\vert )^2+\overline{W}-W(\text{\rm sign}(u)\bar{q})
\hskip1cm\\\nonumber
\leq \frac{1}{2}(  M_0+\frac{2}{\lambda}(M_0-\bar{q}))^2+\overline{W}, \text{ for } x\in S,
\end{eqnarray}
where $\overline{W}$ is the constant in Lemma \ref{v-property}.
The estimate (\ref{nabla-qv-1}) concludes the proof with $K$ given by the last expression in (\ref{nabla-qv-1}).
\end{proof}

\begin{proposition}\label{r0-existence}
Let $\bar{q}\in(0,q^*)$, $\lambda>0$ and
${\overline R}=\frac {q^*-\bar{q}}{M_0}$ as before.
There exists $j_m\in\nat$ such that, if $R_0=\overline {R}+(j_m+1)\lambda$, then a local minimizer $u$  satisfies
\begin{eqnarray}\label{below}
x\in\Omega,\;\; d(x,\partial\Omega)\;\geq R_0\quad\Rightarrow\quad \vert u\vert\;<\; q^*.
\end{eqnarray}
Moreover the number $j_m$ depends only  on $\bar{q},\;\lambda\;$ and the constants $k,\;K$ in Corollary $\ref{energy-corollary}$ and Lemma $\ref{cutting-lemma}.$
\end{proposition}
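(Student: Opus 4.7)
The plan is to argue by contradiction, combining Lemma \ref{cutting-lemma} and Corollary \ref{energy-corollary} in an iterative \emph{exponential-growth-of-the-bad-set} argument that is forced by minimality of $u$. Suppose $x_0\in\Omega$ satisfies $d(x_0,\partial\Omega)\geq R_0$ and $\vert u(x_0)\vert\geq q^*$. The gradient bound in (\ref{bounds-udu}) gives $\vert u(x)\vert\geq q^*-M_0\vert x-x_0\vert>\bar q$ whenever $\vert x-x_0\vert<\overline R=(q^*-\bar q)/M_0$, so $B_{x_0,\overline R}\subset A_{\bar q}$, and in particular
\begin{equation*}
\mathcal H^n(A_{\bar q}\cap B_{x_0,\overline R})\ \geq\ \omega_n\overline R^{n},
\end{equation*}
where $\omega_n$ denotes the volume of the unit ball in $\R^n$. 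This serves as the base of the induction.

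Set $r_j:=\overline R+j\lambda$ for $j=0,1,\dots,j_m+1$, so that $r_{j_m+1}=R_0$ and $B_{x_0,r_{j+1}}\subset\Omega$ for every $j\leq j_m$. I claim that for each such $j$,
\begin{equation*}
K\,\mathcal H^n\bigl(A_{\bar q}\cap(B_{x_0,r_{j+1}}\setminus\overline{B_{x_0,r_j}})\bigr)\ \geq\ k\,\mathcal H^n(A_{\bar q}\cap B_{x_0,r_j}).
\end{equation*}
To see this, apply Lemma \ref{cutting-lemma} with $R=r_j$ (whose hypothesis (\ref{serve}) holds since $B_{x_0,\overline R}\subset A_{\bar q}\cap B_{x_0,r_j}$) to obtain a map $v_j$ that agrees with $u$ outside $S_j:=A_{\bar q}\cap(B_{x_0,r_{j+1}}\setminus\overline{B_{x_0,r_j}})$ and satisfies $J_\Omega(v_j)-J_\Omega(u)\leq K\mathcal H^n(S_j)$. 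The third item of (\ref{v-def-1}), together with $v_j=u$ outside $A_{\bar q}$, forces $\vert v_j\vert\leq\bar q$ on $\partial B_{x_0,r_j+\lambda/2}$; hence Corollary \ref{energy-corollary} applies to $v_j$ with the same radius $r_j$ and yields $w_j$ with $J_\Omega(v_j)-J_\Omega(w_j)\geq k\,\mathcal H^n(\{\vert v_j\vert>\bar q\}\cap B_{x_0,r_j})$. Because $B_{x_0,r_j}$ is disjoint from $S_j$ we have $v_j\equiv u$ on $B_{x_0,r_j}$, so the last set equals $A_{\bar q}\cap B_{x_0,r_j}$. Adding the two energy inequalities and invoking the minimality $J_\Omega(u)\leq J_\Omega(w_j)$ produces the claim.

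Rearranging the claim gives $\mathcal H^n(A_{\bar q}\cap B_{x_0,r_{j+1}})\geq(1+k/K)\,\mathcal H^n(A_{\bar q}\cap B_{x_0,r_j})$, and iterating from $j=0$ to $j_m$,
\begin{equation*}
(1+k/K)^{j_m+1}\omega_n\overline R^n\ \leq\ \mathcal H^n(A_{\bar q}\cap B_{x_0,R_0})\ \leq\ \omega_n R_0^n\ =\ \omega_n\bigl(\overline R+(j_m+1)\lambda\bigr)^n.
\end{equation*}
Since the left-hand side grows exponentially in $j_m$ while the right-hand side grows only polynomially, this inequality fails once $j_m$ exceeds a value $j_m^{\max}$ depending solely on $n$, $\bar q$ (through $\overline R$), $\lambda$, $k$ and $K$. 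Taking $j_m:=j_m^{\max}+1$ forces the assumption $\vert u(x_0)\vert\geq q^*$ to be false and proves (\ref{below}). The main obstacle is the set-theoretic bookkeeping in establishing the one-shell inequality: one must verify both that the cut-off map $v_j$ has $\vert v_j\vert\leq\bar q$ on $\partial B_{x_0,r_j+\lambda/2}$ (so that Corollary \ref{energy-corollary} is applicable) and that $v_j=u$ throughout $B_{x_0,r_j}$ (so that the measure on the right of the corollary's estimate is precisely $\mathcal H^n(A_{\bar q}\cap B_{x_0,r_j})$). Once these two checks are in place, the exponential-versus-polynomial comparison is routine.
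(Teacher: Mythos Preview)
Your argument is correct and follows essentially the same route as the paper: both apply Lemma~\ref{cutting-lemma} at radius $R=\overline R+j\lambda$ to build a competitor $v_j$, then feed $v_j$ into Corollary~\ref{energy-corollary} and use minimality of $u$ to derive the one-shell inequality $\sigma_{j+1}\geq(1+k/K)\sigma_j$, whose exponential growth contradicts the polynomial volume bound. The only cosmetic difference is that the paper bounds $\sigma_{j+1}-\sigma_j$ above by the shell volume whereas you bound $\sigma_{j_m+1}$ by the full ball volume; both comparisons yield the same contradiction.
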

\begin{proof}
Suppose that $\vert u(x_0)\vert\geq q^*$ for some $x_0\in\Omega.$
Then, from (\ref{bounds-udu}),
$$\vert u(x)\vert  >\bar{q}, \quad \forall x\in
B_{x_0,{\overline R}}.$$
Therefore, if $d(x_0, \partial\Omega )\ge {\overline R},$ (\ref{bounds-udu}) implies
\begin{eqnarray}
\mathcal{H}^n(A_{\bar{q}}\cap B_{x_0,\overline {R}})=\mathcal{H}^n( B_{x_0,\overline {R}}):=\sigma_0.
\end{eqnarray}
%\begin{equation}\label{sigmazero}
%\sigma_0:={\mathcal H}^n(B_{x_0,{\overline R}})\,.
%\end{equation}
Now, set
\begin{eqnarray}
\sigma_j:=\mathcal{H}^n(A_{\bar{q}}\cap B_{x_0,\overline {R}+j\lambda}),
\end{eqnarray}
for each $j\in\nat$ such that $d(x_0, \partial\Omega)\ge \overline {R}+(j+1)\lambda$.

Let $v_j^1, v_j^2\in W^{1,2}(\Omega)$ be the maps defined as follows:
\begin{description}
	\item[]$v_j^1$ is the map $v$ defined in Lemma \ref{cutting-lemma} for $B_{x_0,R+\lambda}$ with $R=\overline {R}+j\lambda$.
	\item[]$v_j^2$ is the map $v$ given by Corollary \ref{energy-corollary} when $u=v_j^1$ and $R=\overline {R}+j\lambda$.
\end{description}
From these definitions, Corollary \ref{energy-corollary} and Lemma \ref{cutting-lemma}, we deduce
\begin{eqnarray}
J_\Omega(u)-J_\Omega(v_j^1)&\geq& -K (\sigma_{j+1}-\sigma_j),\\\nonumber
J_\Omega(v_j^1)-J_\Omega(v_j^2)&\geq& k\mathcal{H}^n(A_{\bar{q}}\cap
B_{x_0, \overline{R}+j\lambda})= k \sigma_j.
\end{eqnarray}
By adding these inequalities and using the minimality of $u$ we obtain
\begin{eqnarray}\label{first-sigma-rel}
0\geq J_\Omega(u)-J_\Omega(v_j^2)\geq k \sigma_{j} -K (\sigma_{j+1}-\sigma_j)
\end{eqnarray}
and therefore,
\begin{eqnarray}\label{second-sigma-rel}
\Big (1+\frac{k}{K}\Big )\sigma_{j-1}\leq\sigma_j&\leq&\frac{K}{k}(\sigma_{j+1}-\sigma_j) ,\;\; j\in\nat ,\\\nonumber
\Rightarrow \Big (1+\frac{k}{K}\Big )^{j-1}\sigma_0\leq\sigma_j&\leq&\omega\frac{K}{k}\Big((\overline{R}+(j+1)\lambda)^n-(\overline{R}+j\lambda)^n\Big) ,\;\; j\in\nat .
\end{eqnarray}
where $\omega$ is the measure of the unit ball in $\R^n$.
For $j$ sufficiently large the last inequality is not satisfied and this contradicts the minimality of $u.$
We denote $j_m$ the minimum value of $j$ such that (\ref{second-sigma-rel}) is violated.
Then, (\ref{below}) follows with $R_0=\overline{R}+(j_m+1)\lambda$.
\end{proof}
The existence of the map $(0,q^*]\ni q\rightarrow R(q)$ follows from the fact that all the above arguments can be repeated with a generic $q\in(0,q^*)$ in place of $q^*$. We can obviously assume that $R(q)$ is decreasing and, by modifying it if necessary, we can also assume that it is strictly decreasing and continuous.

For completing the proof of Theorem \ref{teo2}  it remains to prove the estimate (\ref{exp-bound}). Proposition \ref{r0-existence} and in particular (\ref{below}) imply that we can apply Lemma \ref{basic} to $u$ and the ball $B_{x,R}$ for each $x\in \Omega$ such that $d(x,\partial\Omega)= R_0+R$ with $R\geq R_0$. Therefore we obtain
\begin{eqnarray}\label{1}
\vert u(x)\vert\leq\phi(0,R).
\end{eqnarray}
We also have (see \cite{flp}) that
\begin{eqnarray}\label{2}
\phi(0,R)\leq q^* e^{-k_0 R}=q^* e^{k_0 R_0}e^{-k_0d(x,\partial\Omega)},
\end{eqnarray}
for some $k_0>0$ independent of $R\in[\overline{R},+\infty)$.
From (\ref{1}), (\ref{2}) we obtain
\begin{eqnarray}\label{3}
\vert u(x)\vert\leq q(R)\leq K_0e^{-k_0 d(x,\partial\Omega)},\;\text{ for }\;d(x,\partial\Omega)\geq 2R_0,
\end{eqnarray}
  This concludes the proof of Theorem \ref{teo2}.

\bibliographystyle{plain}

\end{document}